\documentclass[11pt,a4paper,english]{article}

\usepackage[T1]{fontenc}
\usepackage[utf8]{inputenc}
\usepackage{babel}
\usepackage[final]{microtype} % Improved typography
\usepackage{lmodern}

\usepackage[a4paper, hmargin={2.7cm,2.7cm},vmargin={3.3cm,3.3cm}]{geometry}
\usepackage{fancyhdr, lastpage}

\usepackage{mathtools} % Loads amsmath automatically, adds delimiter tools
\usepackage{amssymb, amsfonts, amsthm, amsxtra, bm}
\usepackage{mathrsfs}  % For \mathscr
\usepackage{dsfont}    % often used for indicator functions

\usepackage{graphicx}
\graphicspath{{Figures/}}
\usepackage{tikz}
\usepackage{pgfplots}
\usepgfplotslibrary{fillbetween}
\pgfplotsset{compat=newest}

\definecolor{sienna}{RGB}{160,82,45}
\definecolor{violet}{RGB}{238,130,238}
\definecolor{lightorange}{RGB}{253,208,162}

\usepackage{caption}
\usepackage[margin=0.2cm]{subcaption}
\usepackage{float}

\usepackage{enumitem} % Modern replacement for paralist
\setlist{topsep=0.4em, partopsep=0.2em, itemsep=0.1em, parsep=0.05em}
\usepackage{url}
\usepackage{siunitx}
\usepackage{etoolbox} % For patching commands
\usepackage{aliascnt} % Preserve theorem-like environment types for cleveref
\usepackage{cite}
\usepackage[bf,compact,small]{titlesec}
\titlespacing*{\section}{0pt}{14pt}{4pt}
\titlespacing*{\subsection}{0pt}{8pt}{3pt}

\makeatletter
\patchcmd{\ttlh@hang}{\parindent\z@}{\parindent\z@\leavevmode}{}{}
\patchcmd{\ttlh@hang}{\noindent}{}{}{}
\makeatother

\pgfplotsset{every axis/.append style={
    axis x line=middle,
    axis y line=middle,
    axis line style={->},
    xlabel style={at={(ticklabel* cs:1)},anchor=north west},
}}

\DeclarePairedDelimiter{\itvcc}{\lbrack}{\rbrack}      % Closed interval [a,b]
\DeclarePairedDelimiter{\itvoc}{\lparen}{\rbrack}      % Open-Closed (a,b]
\DeclarePairedDelimiter{\itvco}{\lbrack}{\rparen}      % Closed-Open [a,b)
\DeclarePairedDelimiter{\itvoo}{\lparen}{\rparen}      % Open (a,b)
\DeclarePairedDelimiter{\abs}{\lvert}{\rvert}          % Absolute value
\DeclarePairedDelimiter{\floor}{\lfloor}{\rfloor}      % Floor

\DeclarePairedDelimiterX\set[1]\lbrace\rbrace{%
   #1}

\numberwithin{equation}{section}
\allowdisplaybreaks[4]

\newtheorem{theorem}{Theorem}[section]

\newaliascnt{lemma}{theorem}
\newtheorem{lemma}[lemma]{Lemma}
\aliascntresetthe{lemma}
\newaliascnt{proposition}{theorem}
\newtheorem{proposition}[proposition]{Proposition}
\aliascntresetthe{proposition}
\newaliascnt{corollary}{theorem}
\newtheorem{corollary}[corollary]{Corollary}
\aliascntresetthe{corollary}
\theoremstyle{definition}

\theoremstyle{remark}

\newtheorem{remark}{Remark}

\DeclareMathOperator{\exponential}{e}

\DeclareMathOperator{\sgn}{sgn}
\DeclareMathOperator{\sinc}{sinc}

\newcommand{\R}{\mathbb{R}}

\newcommand{\Z}{\mathbb{Z}}
\newcommand{\N}{\mathbb{N}}

\newcommand{\gaborG}[1]{\mathcal{G}(#1,a,b)} % Gabor system shorthand
\newcommand{\sfrac}[1]{F{(#1)}} % Signed fractional part
\newcommand{\round}[1]{R{(#1)}} % Rounding to nearest integer
\newcommand{\myexp}[1]{\exponential^{#1}}

\renewcommand{\Re}{\operatorname{Re}}

\makeatletter
\def\maketimestamp{\count255=\time
\divide\count255 by 60\relax
\edef\thetime{\the\count255:}%
\multiply\count255 by-60\relax
\advance\count255 by\time
\edef\thetime{\thetime\ifnum\count255<10 0\fi\the\count255}
\edef\thedate{\number\day-\ifcase\month\or Jan\or Feb\or Mar\or
             Apr\or May\or Jun\or Jul\or Aug\or Sep\or Oct\or
             Nov\or Dec\fi-\number\year}
\def\timstamp{\hbox to\hsize{\tt\hfil\thedate\hfil\thetime\hfil}}}
\makeatother
\maketimestamp

\fancypagestyle{plain}{%
\fancyhf{} % clear all header and footer fields
\fancyfoot[C]{\small \thepage{} of \pageref{LastPage}}}

\makeatletter
\def\blfootnote{\xdef\@thefnmark{}\@footnotetext} 
\if@titlepage
  \renewenvironment{abstract}{%
      \titlepage \null\vfil
      \@beginparpenalty\@lowpenalty
      \begin{center}\bfseries \abstractname \@endparpenalty\@M \end{center}}%
     {\par\vfil\null\endtitlepage}
\else
  \renewenvironment{abstract}{%
      \small
      \list{}{\setlength{\leftmargin}{3em}\setlength{\rightmargin}{3em}}
      \item[\textbf{\abstractname:}]}
      {\endlist}
\fi
\makeatother

\usepackage{hyperref}
\hypersetup{
    colorlinks=true,
    linkcolor={red!50!black},
    citecolor={blue!50!black},
    urlcolor={blue!80!black},
    pdfview={FitH},
    pdfauthor={Marzieh Hasannasab and Jakob Lemvig},
    pdftitle={The zero set of the Zak transform of B-splines with applications to Gabor frames},
}

\usepackage[noabbrev,nameinlink,capitalize]{cleveref}

\begin{document}

\title{The zero set of the Zak transform of B-splines with applications to Gabor frames}
\author{Marzieh Hasannasab\footnote{Technical University of Denmark. E-mail: \protect\url{mhas@dtu.dk}}\phantom{$\ast$}, Jakob Lemvig\footnote{Technical University of Denmark. E-mail: \protect\url{jakle@dtu.dk}}}
\date{\today}

\blfootnote{2010 {\it Mathematics Subject Classification.} Primary 42C15. Secondary: 42A60}
\blfootnote{{\it Key words and phrases.} B-spline, frame, frame set, Gabor system, Zak transform}

\maketitle
\thispagestyle{plain}

\begin{abstract}
    We study the zero sets of the Zak transform $Z_\lambda B_n(x,\nu)$ of B-splines for $\lambda > 0$. Specifically, we provide a full characterization of the zero set for the hat spline for all positive values of the parameter $\lambda$ and for higher order B-splines when $\lambda > 1$. Finally, we apply these results to establish the frame property of integer-oversampled Gabor systems generated by B-splines. In particular, we characterize the frame property for integer-oversampled Gabor systems generated by the hat splines.
\end{abstract}

\section{Introduction}
\label{sec:intro}

The Zak transform of a function $f \in L^2(\R)$ is defined by:
\begin{equation}
    \label{eq:zak-transform-def}
    Z_\lambda f(x, \nu) = \sqrt{\lambda} \sum_{k \in \Z} f(\lambda(x-k)) \myexp{2 \pi i k \nu}.
\end{equation}
with convergence in $L^2_{\mathrm{loc}}(\R^2)$. The values of $Z_\lambda f$ on the fundamental domain $\itvco{0,1}^2$ uniquely determine its values  on $\R^2$, and the Zak transform is a unitary operator from $L^2(\R)$ to $L^2(\itvco{0,1}^2)$, see, e.g., \cite{MR947891,JanssenZak2003,MR1843717}.

It is well known that if the Zak transform $Z_\lambda f$ is a continuous function, then it must possess at least one zero in the fundamental domain $\itvco{0,1}^2$ \cite{MR1843717,MR947891}. For certain function classes, such as totally positive functions \cite{MR3393698,MR3692123,MR3218799}, this is the \emph{unique} zero in $\itvco{0,1}^2$, located at $(x^*, 1/2)$ for some $x^* \in \itvco{0,1}$. Because totally positive functions are scale-invariant, it suffices to study the zeros of $Z_\lambda f$ for a single value of $\lambda$; specifically, if $f_\lambda(\cdot) \coloneqq f(\lambda \cdot)$, then $Z_\lambda f = Z_1 f_\lambda$. Consequently, considerable attention has been given to the study of $Z_1 f$. For instance Janssen~\cite{JanssenZak2003} showed that for super convex functions $f$, the Zak transform $Z_1 f$ has a unique zero at $(1/2, 1/2)$. 

In this work, we focus on the Zak transform of the centered B-splines $B_n$ of order $n \in \N$, which are defined recursively by convolution as $B_1 = \chi_{\itvco{-1/2, 1/2}}$ and $B_{n+1} = B_{n} \ast B_{1}$. Because B-splines with integer knots lack the scale invariance of totally positive functions, the aforementioned simplification does not apply. Thus, one must study $Z_\lambda B_n$ across all scaling parameters $\lambda > 0$, a regime that remains largely unexplored in the literature. As we will demonstrate, the behavior and geometric structure of the zero set of $Z_\lambda B_n$ depend delicately on the parameter $\lambda$. 
Historically, Schoenberg investigated $Z_1 B_n$ under the framework of exponential Euler splines \cite{MR420078}, and it was subsequently established that $Z_1 B_n$ possesses a unique zero at $(1/2, 1/2)$ for all $n \ge 2$ \cite{MR1113840}. Further structural and geometric properties of $Z_1 B_n$ was studied in, e.g., \cite{MR1113840,MR493059}. However, the methods used in these works are not applicable to the case of general $\lambda$ as they rely on the invariance of the integer knots under the scaling by $\lambda$. The second named author showed with K.~Nielsen in~\cite{MR3572909} that the zero set of $Z_\lambda B_n$ contains a number of horizontal lines when $\lambda < 1$ (see Proposition \ref{thm:zeros-zak-bspline} below) and used this zero set to disprove the so-called frame set conjecture for B-splines \cite{Groechenigmystery2014}.
Further results on the frame set of B-splines, including positive frame regions and additional non-frame obstructions, can be found in \cite{ChristensenKimKim2015, Grochenig2015, atindehou2023frame, lemvig2026new, GhoshSelvan2025}.
% =======
% Historically, Schoenberg investigated $Z_1 B_n$ under the framework of exponential Euler splines \cite{MR420078}, and it was subsequently established that $Z_1 B_n$ possesses a unique zero at $(1/2, 1/2)$ for all $n \ge 2$ \cite{MR1113840}. Further structural and geometric properties of $Z_1 B_n$ were studied in, e.g., \cite{jetter1991schoenberg,MR493059}. However, the methods used in these works are not applicable to the case of general $\lambda$ as they rely on the invariance of the integer knots under scaling by $\lambda$. The second named author showed with K.~Nielsen in~\cite{MR3572909} that the zero set of $Z_\lambda B_n$ contains a number of horizontal lines when $\lambda < 1$ (see Proposition \ref{thm:zeros-zak-bspline} below) and used this zero set to disprove the so-called frame set conjecture for B-splines \cite{Groechenigmystery2014}.

The main contribution of this paper is a comprehensive analysis of the zero set of $Z_\lambda B_n$ for both small and large scaling parameters. We show that for all $n \ge 2$, the zero set of $Z_\lambda B_n$ consists of a single point at $(1/2, 1/2)$ whenever $1 \le \lambda < n$. Conversely, for small scaling values ($\lambda < 1$), the zero set exhibits a significantly more intricate structure. In this regime, the fixed point at $(1/2, 1/2)$ and the horizontal lines identified in \cite{MR3572909} do not exhaust the zero set. We demonstrate that additional zeros emerge along the line $x=1/2$, arising as the roots of a specific class of palindromic polynomials. We provide a complete characterization of the zero set for the linear hat spline $B_2$ for all $\lambda > 0$ and for higher-order B-splines when $\lambda \ge 1$.

Characterizing the zero set of the Zak transform for all values of $\lambda$ is of primary interest because it is closely linked to the frame properties of Gabor systems generated by B-splines. For integer-oversampled Gabor systems 
\[ 
    \gaborG{B_n} = \set{\myexp{2\pi i b k\cdot} B_n(\cdot - am)}_{k,m \in \Z}, 
\] 
where $ab = 1/q$ for some integer $q \ge 2$, the Zibulski-Zeevi characterization \cite{zibulski1997analysis} states that $\gaborG{B_n}$ is \emph{not} a frame if and only if the Zak transform $Z_{1/b} B_n(\cdot, \nu_0)$ exhibits $q$ zeros spaced $1/q$ apart on the same horizontal line $\nu = \nu_0$ for some $\nu_0 \in \itvco{0,1}$. That is, the system fails to be a frame if and only if
\begin{equation}
    \label{eq:ZZ-characterization-q-zeros}
    Z_{1/b} B_n(x_0+\ell/q, \nu_0) = 0 \quad \text{for $\ell=0,1,\ldots,q-1$}
\end{equation}
for some $(x_0, \nu_0) \in \itvco{0,1}^2$.

In Section~\ref{sec:preliminaries}, we introduce the necessary preliminaries and prove a key aliasing lemma. In Section~\ref{sec:zeros-higher-order-bspline-small-b}, we study the Zak transform $Z_\lambda B_n$, $n\ge2$, in the regime $\lambda>1$. Section~\ref{sec:zeros-boundary-lines} is devoted to the zeros of $Z_\lambda B_n$ along certain boundary lines. In Section~\ref{sec:zeros-set-Bn}, we combine these results to obtain a complete description of the zero set of $Z_\lambda B_n$ for $\lambda>1$, and a complete characterization of the zero set of $Z_\lambda B_2$ for $\lambda<1$. In Section~\ref{sec:palindromic-polynomials}, we discuss additional zeros of $Z_\lambda B_n(1/2,\cdot)$ that arise from the roots of palindromic polynomials. Finally, in Section~\ref{sec:frame-property-gabor-systems}, we apply the zero-set results to characterize the frame property of integer-oversampled Gabor systems generated by B-splines.

\paragraph{Added note.} During the final editing of this manuscript, Alexander Stangl and Christina Frederick published a paper on the Zak transform of the hat spline with overlapping results on arXiv \cite{stangl2026frameset}. In particular, the paper also characterized the frame property of the integer oversampled Gabor systems generated by the hat spline.  

\section{Preliminaries}
\label{sec:preliminaries}

The Zak transform of a function $Z_\lambda f(x, \nu)$ is periodic in the variable $\nu$ with period $1$, and quasi-periodic in the variable $x$ with phase factor $z = \myexp{2\pi i \nu}$:
\begin{equation}
    \label{eq:zak-quasi-periodic}
    Z_\lambda f(x+1, \nu) = \myexp{2\pi i \nu} Z_\lambda f(x, \nu).
\end{equation}
Hence, we can identify the Zak transform with its restriction to the fundamental domain $\itvco{0,1}^2$. 

\subsection{Symmetries of the Zak transform}
\label{sec:symmetries-zak-transform}

Assume for the remainder of this section that $f$ is a real and even function whose Zak transform is continuous, e.g., the centered B-splines $B_n$ for $n \ge 2$. Then, the Zak transform $Z_\lambda f$ has the following symmetries:
\begin{equation}
    \label{eq:Zak-symmetry}
    Z_\lambda f(x, \nu) = Z_\lambda f(-x, -\nu) = \overline{Z_\lambda f(x, -\nu)}
\end{equation}
It follows, using also the quasi-periodicity, that $Z_\lambda f(x+1/2, 1/2) = -Z_\lambda f(1/2-x, 1/2)$ for all $x \in \R$. In particular, $Z_\lambda f(1/2, 1/2) = -Z_\lambda f(1/2, 1/2)$ which shows that $Z_\lambda f(1/2, 1/2)=0$ for all $\lambda>0$.

From \eqref{eq:Zak-symmetry}, it also follows that the fundamental domain can be reduced to $\itvcc{0,1/2}^2$ in the sense that the values of $Z_\lambda f$ on $\itvcc{0,1/2}^2$ determine the values on $\R^2$. It will be helpful to distinguish between zeros of $Z_\lambda B_n$ that are located in the interior of the reduced fundamental domain $\itvcc{0,1/2}^2$ and those that are located on the boundary of $\itvcc{0,1/2}^2$:
\begin{enumerate}[label=(\Roman*)]
    \item interior points: $(x,\nu) \in \itvoo{0,1/2}^2$, and
    \item boundary points: $(x,\nu) \in \itvcc{0,1/2}^2 \setminus \itvoo{0,1/2}^2$.
\end{enumerate}

On the boundary of the reduced fundamental domain $\itvcc{0,1/2}^2$, the Zak transform $Z_\lambda f$ is real-valued (up to a phase factor) as shown in the following lemma. 
\begin{lemma}
    \label{lem:real-valued-Zak_along-boundary}
    Let $\lambda > 0$, and let $f$ be a real and even function whose Zak transform is well-defined. The scalar functions $Z_\lambda f(x, 0)$, $Z_\lambda f(x, 1/2)$, $Z_\lambda f(0, \nu)$ and $\myexp{-\pi i \nu} Z_\lambda f(1/2, \nu)$ are real-valued for all $x,\nu \in \R$.
\end{lemma}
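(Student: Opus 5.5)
The plan is to derive each claim from the symmetries \eqref{eq:Zak-symmetry} together with $1$-periodicity in $\nu$ and the quasi-periodicity \eqref{eq:zak-quasi-periodic}. Alternatively, one can work directly from the series \eqref{eq:zak-transform-def}, using that $f$ is real and even. The principle is simple: a complex number $w$ is real iff $w=\overline{w}$. So in each case I will show that the quantity equals its own complex conjugate.

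\textbf{The lines $\nu=0$ and $\nu=1/2$.} From \eqref{eq:Zak-symmetry} we have $\overline{Z_\lambda f(x,\nu)} = Z_\lambda f(x,-\nu)$.
\begin{itemize}
\item For $\nu=0$ this gives $\overline{Z_\lambda f(x,0)}=Z_\lambda f(x,0)$ directly.
\item For $\nu=1/2$, periodicity in $\nu$ gives $Z_\lambda f(x,-1/2)=Z_\lambda f(x,1/2)$. Hence $Z_\lambda f(x,1/2)$ is real.
\end{itemize}
If one prefers the series, note that $\myexp{2\pi i k\nu}$ equals $1$ or $(-1)^k$ in these two cases, so every term is real.

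\textbf{The line $x=0$.} Combining the two equalities in \eqref{eq:Zak-symmetry} gives $\overline{Z_\lambda f(x,\nu)} = Z_\lambda f(-x,\nu)$. Setting $x=0$ shows that $Z_\lambda f(0,\nu)$ is real. In series form, the terms with indices $k$ and $-k$ pair up, since $f(-\lambda k)=f(\lambda k)$. Together they contribute $2f(\lambda k)\cos(2\pi k\nu)$.

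\textbf{The line $x=1/2$.} By the relation just derived, $\overline{Z_\lambda f(1/2,\nu)}=Z_\lambda f(-1/2,\nu)$. By quasi-periodicity \eqref{eq:zak-quasi-periodic} applied with $x=-1/2$,
\[
Z_\lambda f(-1/2,\nu)=\myexp{-2\pi i\nu}Z_\lambda f(1/2,\nu).
\]
Therefore
\[
\overline{\myexp{-\pi i\nu}Z_\lambda f(1/2,\nu)}=\myexp{\pi i\nu}\myexp{-2\pi i\nu}Z_\lambda f(1/2,\nu)=\myexp{-\pi i\nu}Z_\lambda f(1/2,\nu),
\]
so this quantity is real.

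As a series check, one can reindex. The terms $k$ and $1-k$ have equal values of $f$, because $\lambda(1/2-k)=-\lambda(1/2-(1-k))$. Their phases multiplied by $\myexp{-\pi i\nu}$ are $\myexp{\pi i(2k-1)\nu}$ and $\myexp{-\pi i(2k-1)\nu}$. Each pair therefore sums to a real cosine term.

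\textbf{Main obstacle.} The only subtlety is that convergence is in $L^2_{\mathrm{loc}}$, so pointwise identities hold only almost everywhere unless $Z_\lambda f$ is continuous. This is the standing assumption in this section, and it holds for $B_n$, $n\ge2$, where the sum is finite. I would state the lemma under this continuity or well-definedness assumption. The symmetry identities \eqref{eq:Zak-symmetry} and \eqref{eq:zak-quasi-periodic} then hold pointwise, and the argument above goes through verbatim.
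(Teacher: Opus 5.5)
Your proof is correct and follows the paper's argument. For the line $x=1/2$, the paper likewise combines quasi-periodicity at $x=-1/2$ with the symmetry $Z_\lambda f(-x,\nu)=\overline{Z_\lambda f(x,\nu)}$ to show that $\myexp{-\pi i\nu}Z_\lambda f(1/2,\nu)$ equals its own conjugate, and it calls the other three lines similar; you have written those out, and your series checks and continuity caveat are sound.
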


\begin{proof}
    By quasi-periodicity we have $Z_\lambda f(1/2, \nu) = \myexp{2\pi i \nu} Z_\lambda f(-1/2, \nu)$. Using the symmetry $Z_\lambda f(-x, \nu) = \overline{Z_\lambda f(x, \nu)}$ we then get $Z_\lambda f(1/2, \nu) = \myexp{2\pi i \nu} \overline{Z_\lambda f(1/2, \nu)}$. Hence $\myexp{-\pi i \nu} Z_\lambda f(1/2, \nu) = \overline{\myexp{-\pi i \nu} Z_\lambda f(1/2, \nu)}$ which shows that
    $\myexp{-\pi i \nu} Z_\lambda f(1/2, \nu)$ is real-valued for all $\nu$. The other cases follow similarly.
\end{proof}

We will also split the analysis into $\lambda > 1$ and $\lambda < 1$ since the zero set of $Z_\lambda B_n$ exhibits different behavior in these two regimes. Recall that the case $\lambda = 1$ is classical and well understood: $Z_1 B_n$ has a unique zero at $(1/2, 1/2)$ for all $n \ge 2$ \cite{MR1113840}.

\subsection{Aliasing lemma}
\label{sec:aliasing-lemma}

For $x\in \R$ we let $\round{x}$ denote the round function to the nearest integer, i.e., $\round{x}=\floor{x+\frac12}$, and let $\sfrac{x} = x-\round{x} \in \itvco{-\frac12,\frac12}$ denote the (signed) fractional part of $x$. Our results will often involve the parity of $\round{x}$ and the distance of $x$ to the nearest integer $\abs{\sfrac{x}} = \operatorname{dist}(x,\Z)$. This is also the case of the first useful aliasing lemma. 

\begin{lemma}
    \label{lem:aliasing}
    Let $n\ge2$, $b>0$, and $\nu\in(0,1)$. Suppose that $M\in\Z$
    satisfies $M\nu=\ell\in\Z$, and write $b=M+F$ and $d=\abs{F}$.
    If $d>0$, then
    \begin{equation}
		\label{eq:aliasing-lemma}
    Z_{1/b}B_n(x,\nu)
    =
    C
    \begin{cases}
        Z_{1/d}B_n(x,\nu),
        & nM \text{ even},\\
        \myexp{\pi i\nu}
        Z_{1/d}B_n\left(x-\frac12,\nu\right),
        & nM \text{ odd},
    \end{cases}		
	\end{equation}
    where
    \[
        C
        =
        (-1)^{n\ell}\sgn(F)^n
        \left(\frac{d}{b}\right)^{n-\frac12}
        \neq0.
    \]
    If $d=0$, then $Z_{1/b}B_n(\cdot,\nu)\equiv0$.
\end{lemma}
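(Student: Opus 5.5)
The plan is to pass to the Fourier side via the Poisson summation formula. There the dependence on the dilation $b$ enters only through the sinc factors, and the relation $b=M+F$ with $M\nu=\ell$ can be exploited through the $\pi$-periodicity of $\sin$ up to sign.

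\textbf{Step 1: Fourier-side representation.} With the normalization $\hat f(\xi)=\int f(t)\myexp{-2\pi i t\xi}\,dt$ we have $\hat B_n(\xi)=\sinc^n(\xi)$, where $\sinc(t)=\sin(\pi t)/(\pi t)$. Put $g(t)=B_n(t/b)$, so that $\hat g(\xi)=b\,\sinc^n(b\xi)$. Replacing $k$ by $-k$ in \eqref{eq:zak-transform-def} and applying Poisson summation to $k\mapsto g(x+k)\myexp{-2\pi i (x+k)\nu}$ gives
\[
Z_{1/b}B_n(x,\nu)=b^{-1/2}\sum_{k\in\Z} g(x+k)\myexp{-2\pi i k\nu}
= b^{1/2}\,\myexp{2\pi i x\nu}\sum_{m\in\Z}\sinc^n\bigl(b(\nu+m)\bigr)\myexp{2\pi i m x}.
\]
This identity needs some justification, which I expect to be the only mildly delicate point. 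For $n\ge2$ the function $g$ is continuous, compactly supported and of bounded variation. Moreover $\abs{\sinc^n(b(\nu+m))}\lesssim (1+\abs{m})^{-2}$, so the right-hand side converges absolutely and uniformly. Hence the identity holds pointwise for every $x$, not merely in $L^2_{\mathrm{loc}}$. The same formula with $b$ replaced by $d$ describes $Z_{1/d}B_n$.

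\textbf{Step 2: aliasing of the sine.} Since $b(\nu+m)=M\nu+Mm+F(\nu+m)=\ell+Mm+F(\nu+m)$, we get
\[
\sin\bigl(\pi b(\nu+m)\bigr)=(-1)^{\ell+Mm}\sin\bigl(\pi F(\nu+m)\bigr)=(-1)^{\ell+Mm}\sgn(F)\sin\bigl(\pi d(\nu+m)\bigr).
\]
Because $\nu\in(0,1)$, we have $\nu+m\neq0$, so we may divide by $\pi b(\nu+m)=(b/d)\,\pi d(\nu+m)$ (for $d>0$). This yields
\[
\sinc^n\bigl(b(\nu+m)\bigr)=(-1)^{n\ell}\sgn(F)^n\,(d/b)^n\,(-1)^{nMm}\,\sinc^n\bigl(d(\nu+m)\bigr).
\]
If $d=0$, the numerator $\sin(\pi F(\nu+m))$ vanishes for every $m$ while the denominator does not. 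Hence every term is zero and $Z_{1/b}B_n(\cdot,\nu)\equiv0$.

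\textbf{Step 3: reassembling.} Insert the identity from Step 2 into Step 1 and use $b^{1/2}(d/b)^n=(d/b)^{n-1/2}d^{1/2}$. When $nM$ is even the factor $(-1)^{nMm}$ equals $1$, so the remaining sum is exactly the Step 1 expression for $Z_{1/d}B_n(x,\nu)$, multiplied by $C$.

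When $nM$ is odd, write $(-1)^m\myexp{2\pi i m x}=\myexp{2\pi i m(x-1/2)}$ and $\myexp{2\pi i x\nu}=\myexp{\pi i\nu}\myexp{2\pi i (x-1/2)\nu}$. This produces $C\,\myexp{\pi i\nu}Z_{1/d}B_n(x-\tfrac12,\nu)$. Finally, $C\neq0$ is clear since $d>0$.
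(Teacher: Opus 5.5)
Your proposal is correct and follows the paper's proof essentially step for step. Both arguments use Poisson summation, the aliasing identity $\sin(\pi b(m+\nu))=(-1)^{Mm+\ell}\sin(\pi F(m+\nu))$, and, when $nM$ is odd, rewrite the factor $(-1)^m$ as a half-shift in $x$. Your one addition is the remark that for $n\ge2$ the Fourier series converges absolutely, so the identity, and hence the vanishing when $d=0$, holds pointwise; the paper leaves this implicit.
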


\begin{proof}
    By Poisson summation and
    \[
        \widehat{B_n}(\xi)
        =
        \left(\frac{\sin(\pi\xi)}{\pi\xi}\right)^n,
    \]
    we have
    \[
        Z_{1/b}B_n(x,\nu)
        =
        \sqrt b
        \sum_{m\in\Z}
        \widehat{B_n}\bigl(b(m+\nu)\bigr)
        \myexp{2\pi i(m+\nu)x}.
    \]
    Since $b=M+F$ and $M\nu=\ell$,
    \[
        b(m+\nu)
        =
        Mm+\ell+F(m+\nu),
    \]
    and therefore
    \[
        \sin\bigl(\pi b(m+\nu)\bigr)
        =
        (-1)^{Mm+\ell}
        \sin\bigl(\pi F(m+\nu)\bigr).
    \]
    Writing $d=\abs{F}$ gives
    \[
        Z_{1/b}B_n(x,\nu)
        =
        C\sum_{m\in\Z}
        (-1)^{nMm}
        \sqrt d\,
        \widehat{B_n}\bigl(d(m+\nu)\bigr)
        \myexp{2\pi i(m+\nu)x}.
    \]
    If $nM$ is even, the factor $(-1)^{nMm}$ is identically one.
    If $nM$ is odd, then $(-1)^{nMm}=(-1)^m$, and
    \[
        (-1)^m \myexp{2\pi i(m+\nu)x}
        =
        \myexp{\pi i\nu}
        \myexp{2\pi i(m+\nu)(x-\frac12)}.
    \]
    This proves the assertion for $d>0$.

    If $d=0$, then $b=M$ and $M\nu=\ell\in\Z$. Since $\nu\in(0,1)$,
    $M(m+\nu)=Mm+\ell$ is a nonzero integer for every $m\in\Z$.
    Hence all terms in the Poisson representation vanish.
\end{proof}
\section{The Zak transform $Z_\lambda B_n$ for $\lambda > 1$}
\label{sec:zeros-higher-order-bspline-small-b}

In this section we study the zero set of $Z_\lambda B_n$ on the domain $\itvco{0,1}^2$ for $\lambda > 1$. We first consider the case $\lambda \ge n$, which is the simplest ``Painless'' case, where the zero set of $Z_\lambda B_n$ is a vertical strip centered around $x = 1/2$.

We then, in~\Cref{thm:zeros-zak-bspline-medium-lambda}, consider the more delicate and interesting case $1 < \lambda < n$.

\begin{proposition}
	\label{thm:zeros-zak-bspline-large-lambda}
	Let $n\geq 2$ and $\lambda \ge n$. Then $Z_\lambda B_n(x,\nu) = 0$ for $(x,\nu) \in \itvco{0,1}^2$ if and only if $x \in \itvcc{n/(2\lambda), 1-n/(2\lambda)}$ and $\nu \in \itvco{0,1}$. 
\end{proposition}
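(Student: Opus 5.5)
The plan is to work directly from the definition \eqref{eq:zak-transform-def} and use the compact support of $B_n$. Since $B_n$ is supported on $\itvcc{-n/2,n/2}$ and is strictly positive on $\itvoo{-n/2,n/2}$, the term $B_n(\lambda(x-k))$ is nonzero if and only if $\abs{x-k}<n/(2\lambda)$. Because $\lambda\ge n$, we have $n/(2\lambda)\le 1/2$. Hence for each $x\in\R$ at most one $k\in\Z$ gives a nonzero term. The only exception is the tie $\abs{x-k}=\abs{x-k'}=1/2$, and there both terms vanish anyway, since the inequality is strict.

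For $x\in\itvco{0,1}$, the only candidates are therefore $k=0$ and $k=1$, and the sum reduces to a single term:
\[
Z_\lambda B_n(x,\nu)=\sqrt{\lambda}\,B_n(\lambda x)\quad\text{if } 0\le x<\tfrac{n}{2\lambda},
\]
\[
Z_\lambda B_n(x,\nu)=\sqrt{\lambda}\,B_n(\lambda(x-1))\,\myexp{2\pi i\nu}\quad\text{if } 1-\tfrac{n}{2\lambda}<x<1,
\]
and $Z_\lambda B_n(x,\nu)=0$ otherwise. These two ranges do not overlap, since $n/(2\lambda)\le 1-n/(2\lambda)$. In the first two cases the modulus equals $\sqrt\lambda\,B_n(\lambda x)$ or $\sqrt\lambda\,B_n(\lambda(x-1))$. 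This is strictly positive because the argument lies in $\itvoo{-n/2,n/2}$.

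It follows that $Z_\lambda B_n(x,\nu)=0$ exactly when $x\in\itvcc{n/(2\lambda),1-n/(2\lambda)}$, with $\nu$ arbitrary in $\itvco{0,1}$. For $n\ge2$, the sum is a finite sum of continuous functions, so the pointwise evaluation is legitimate; it agrees with the $L^2_{\mathrm{loc}}$ definition.

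The only point needing care is the positivity of $B_n$ on the open interval $\itvoo{-n/2,n/2}$ together with its vanishing at the endpoints $\pm n/2$. These facts follow by induction from $B_{n+1}=B_n\ast B_1$: a convolution of a function positive on $\itvoo{-n/2,n/2}$ with $\chi_{\itvco{-1/2,1/2}}$ is positive exactly on $\itvoo{-(n+1)/2,(n+1)/2}$. Continuity for $n\ge2$ gives the vanishing at the endpoints. The degenerate case $\lambda=n$ needs no separate argument: the zero interval shrinks to the single point $x=1/2$, which the argument above already covers.
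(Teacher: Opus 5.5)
Your proposal is correct and follows essentially the same route as the paper: the support condition $B_n(\lambda(x-k))\neq0 \Leftrightarrow \abs{x-k}<n/(2\lambda)$ together with $n/(2\lambda)\le 1/2$ leaves at most one nonzero term in the Zak sum, so $Z_\lambda B_n(x,\nu)$ vanishes exactly for $x\in\itvcc{n/(2\lambda),1-n/(2\lambda)}$, for every $\nu$. You also spell out two points the paper takes for granted, namely the strict positivity of $B_n$ on $\itvoo{-n/2,n/2}$ and the disjointness of the two ranges where a term is nonzero.
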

\begin{proof}
	This is the painless case, and the assertion follows directly from the support of the B-spline $B_n$. Recall that
\[
B_n(\lambda(x-k))\neq0
\quad\Leftrightarrow\quad
|x-k|<\frac{n}{2\lambda}.
\]
For $\lambda\ge n$, we have $n/(2\lambda)\le1/2$. Hence, for $x\in\itvco{0,1}$, all terms in the Zak sum vanish exactly when
\[
\frac{n}{2\lambda}\le x\le 1-\frac{n}{2\lambda}.
\]
Outside this interval exactly one term is nonzero, so $Z_\lambda B_n(x,\nu)\neq0$ for every $\nu\in\itvco{0,1}$. Therefore
\[
Z_\lambda B_n(x,\nu)=0 \quad
\Leftrightarrow \quad
x\in\itvcc*{\frac{n}{2\lambda},1-\frac{n}{2\lambda}}.
\]
\end{proof}

\Cref{thm:zeros-zak-bspline-large-lambda} shows that for the critical value $\lambda = n$, the Zak transform $Z_\lambda B_n$ vanishes exactly along the line $x = 1/2$. As $\lambda$ decreases below $n$, this zero set disappears: for $1<\lambda<n$, the Zak transform $Z_\lambda B_n$ has no zeros in the domain $\itvco{0,1}\times \itvoo{0,1/2}$ as we show next. In this sense, the domain is essentially maximal, since $Z_\lambda B_n(1/2, 1/2) = 0$ for all $\lambda > 0$.

\begin{theorem}
	\label{thm:zeros-zak-bspline-medium-lambda}
	Let $n\geq 2$ and $1<\lambda<n$. For every $\nu\in(0,1/2)$, we 
    define
	\begin{equation}\label{def:F_n}
	F_{n}(x)
	=
	\lambda^{-\frac{1}{2}} Z_\lambda B_n(x,\nu).
	\end{equation}
	Then $F_{n}(\cdot)$ is nonzero on $\mathbb R$. In particular, the argument admits a continuous branch $\theta_{n}(x)=\arg F_{n}(x)$. Moreover, this branch may be chosen nondecreasing on $\mathbb R$ satisfying
	\[
      \theta_{n}(x+1)= \theta_{n}(x)+ 2 \pi \nu.
	\]
\end{theorem}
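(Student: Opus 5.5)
We argue by induction on $n$, using the convolution structure $B_{n+1}=B_n\ast B_1$. Put $z=\myexp{2\pi i\nu}$ and $h=1/\lambda\in(1/n,1)$. Since $B_{n+1}(y)=\int_{y-1/2}^{y+1/2}B_n(t)\,dt$, the definition \eqref{eq:zak-transform-def} gives the moving-average identity
\[
F_{n+1}(x)=\lambda\int_{x-h/2}^{x+h/2}F_n(t)\,dt .
\]
Quasi-periodicity \eqref{eq:zak-quasi-periodic} gives $F_n(x+1)=zF_n(x)$. This already yields the relation $\theta_n(x+1)=\theta_n(x)+2\pi\nu$, once we know $F_n$ is nonvanishing and its argument is nondecreasing: the increment over a period is then $2\pi\nu+2\pi k$ with $k\ge 0$, and we will show $k=0$ by bounding the total increase.

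\textbf{Base case.} Take the case $n=1$ formally, or $n=2$ directly when $1<\lambda<2$. For $B_1$ with $h<1$, on each interval of length $1$ the function $F_1$ is piecewise constant, equal to $0$ on a gap of length $1-h$ and equal to $z^k$ elsewhere. So $F_1$ vanishes, and it cannot serve as the base case.

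Instead I would take the base case $n=2$, $1<\lambda<2$, i.e.\ $h\in(1/2,1)$. Here $F_2$ is the piecewise-linear interpolation of $F_1$, and on each piece $F_2$ is a convex combination of at most two consecutive values $z^k$ and $z^{k+1}$. Since $0<\nu<1/2$, the points $1$ and $z$ are never antipodal, so the segment joining them avoids the origin. The key check is that with $h>1/2$ there is no gap where all terms vanish. The path then runs monotonically along chords from $z^k$ to $z^{k+1}$, which gives a nondecreasing argument increasing by exactly $2\pi\nu$ per period.

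\textbf{Inductive step.} Assume $F_n$ is nonvanishing with a nondecreasing argument that increases by $2\pi\nu<\pi$ per period. For $n+1$ with $h\in(1/(n+1),1)$ there is a difficulty: when $h\le 1/n$ the hypothesis does not hold for the same $\lambda$. So the induction must instead be on a stronger geometric property, valid for all $h$ in the relevant range.

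The property I would propagate is that $F_n$ traces a curve that is locally convex and star-shaped with respect to $0$: every arc of parameter length $<1$ subtends an angle $<\pi$ and is monotone in argument. The averaging over a window of length $h<1$ then stays in the convex hull of an arc of angular width $<\pi$ not containing $0$. This gives $F_{n+1}\neq0$.

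For monotonicity, differentiate:
\[
F_{n+1}'(x)=\lambda\bigl(F_n(x+h/2)-F_n(x-h/2)\bigr).
\]
Writing $\theta'=\operatorname{Im}(F'/F)$, we get
\[
\operatorname{Im}\bigl(\overline{F_{n+1}}F_{n+1}'\bigr)
=\lambda^2\int\operatorname{Im}\Bigl(\overline{F_n(t)}\bigl(F_n(x+h/2)-F_n(x-h/2)\bigr)\Bigr)\,dt .
\]
Each term $\operatorname{Im}\bigl(\overline{F_n(t)}F_n(s)\bigr)$ is $\ge0$ for $s\ge t$ within a window of angular width $<\pi$, and $\le0$ for $s\le t$. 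The differences therefore have the correct sign, so $\theta_{n+1}$ is nondecreasing.

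\textbf{Main obstacle.} The hard step is controlling the angular width of windows of length $h$ when $h$ is close to $1/n$ rather than $1$. The argument of $F_n$ over a window must stay below $\pi$, yet it can approach $2\pi\nu$ over a full period while being concentrated. My first attempt would be the alternative route through \cref{lem:aliasing}, Poisson summation. Writing $F_n$ as $\sum_m c_m\myexp{2\pi i(m+\nu)x}$ with $c_m\propto\operatorname{sinc}^n(b(m+\nu))$, $b=1/\lambda\in(1/n,1)$, I would show that one term ($m=0$) dominates the sum of the others, using the decay $\abs{m+\nu}^{-n}$ together with the sign pattern. Dominance of a single exponential gives both nonvanishing and, after differentiating, a positive winding rate of the argument. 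I expect the dominance estimate near $b\to1$ and $\nu\to1/2$ to be the delicate point, and there I would fall back on the convolution/total-positivity argument above.
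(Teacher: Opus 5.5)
There is a genuine gap, and it sits at the ``difficulty'' you flag in the inductive step. The step itself is correct and is the paper's step: averaging over a window of length $h=1/\lambda<1$, the sector argument giving $F_{n+1}\neq0$, and the sign of $\operatorname{Im}(\overline{F_{n+1}}F_{n+1}')$. But it only passes from $(n,\lambda)$ to $(n+1,\lambda)$, so it needs $\lambda<n$. Your base case covers only $1<\lambda<2$, so no $\lambda\ge2$ is ever reached. For $\lambda\in[n,n+1)$ the order $n+1$ is the first admissible one, and there $F_n$ genuinely vanishes on $[n/(2\lambda),1-n/(2\lambda)]$ by \Cref{thm:zeros-zak-bspline-large-lambda}. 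Hence the ``stronger'' property you propose (nonvanishing, monotone argument, arcs of parameter length $<1$ subtending angle $<\pi$) is false for $F_n$ in the range $h\in(1/(n+1),1/n]$ you would need, and it cannot be propagated.

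The Poisson fallback does not close the gap either. At $\nu=1/2$ the coefficients $\sinc^n(b\nu)$ and $\sinc^n(b(\nu-1))$ have equal modulus, so single-term dominance fails for $\nu$ near $1/2$ for every $b$, not only as $b\to1$. Moreover, dominance would give nonvanishing but not monotonicity of the argument without a further weighted estimate. Your stated ``main obstacle'' is also not one. Once $\theta_n$ is nondecreasing with $\theta_n(x+1)=\theta_n(x)+2\pi\nu$, any window $[p,q]$ with $q<p+1$ satisfies $0\le\theta_n(q)-\theta_n(p)\le\theta_n(p+1)-\theta_n(p)=2\pi\nu<\pi$, however small $h$ is.

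The missing idea is to fix $\lambda$ and induct on $n$ starting from $m=\floor{\lambda}+1$, the smallest order exceeding $\lambda$. The hypothesis $1<\lambda<n$ is then inherited by $n+1$ automatically, so no change of range ever occurs. Since $\lambda>1$, one has $1<m/\lambda<2$. So on $[0,1)$ only the translates $k=0,1$ contribute, and they never vanish simultaneously: $F_m(x)=a(x)+c(x)\myexp{2\pi i\nu}$ with $a(x)=B_m(\lambda x)$ and $c(x)=B_m(\lambda(1-x))$ nonnegative and not both zero. Hence $F_m\neq0$, because $0<2\pi\nu<\pi$. As $a$ is nonincreasing and $c$ nondecreasing on $[0,1]$, you get $\operatorname{Im}(F_m'\overline{F_m})=(ac'-a'c)\sin(2\pi\nu)\ge0$. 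The endpoint values $F_m(0)=B_m(0)$ and $F_m(1)=B_m(0)\myexp{2\pi i\nu}$ fix the increment over a period at exactly $2\pi\nu$. This is your $n=2$ computation for general $m$, and your averaging step then completes the induction unchanged.
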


\begin{proof} Let $m=\lfloor \lambda \rfloor +1.$ We first prove the claim for $F_{m}$ and then inductively for $F_{n}$, $n\geq m$. 
	
	\emph{Base case.}
    Since $\lambda>1$, we have 
	$
	1<\frac{m}{\lambda}< 2.
	$
Therefore 
	\[
	B_m\bigl(\lambda(x-k)\bigr)\neq0 \quad\Leftrightarrow\quad
	|x-k|<\frac{m}{2\lambda}<1.
	\]
	Consequently, when $0 \le x<1$, the only potentially nonzero terms in \eqref{def:F_n} are those with $k=0$ and $k=1$. Therefore
	\begin{align*}
	F_{m}(x)
&=
B_m(\lambda x)
+
B_m\bigl(\lambda(x-1)\bigr)\myexp{2\pi i \nu}\\
	&=
	B_m(\lambda x)
+
B_m\bigl(\lambda(1-x)\bigr)\myexp{2\pi i \nu}.
	\end{align*}
	Set
	\[
	a(x)=B_m\left(\lambda x\right),
	\qquad
	c(x)=B_m\left(\lambda(1-x)\right), \quad\text{ and, }\quad \alpha=2 \pi \nu.
	\]
	Since $m/\lambda>1$, the two supports overlap enough that $a(x)$ and
	$c(x)$ are not both zero on $\itvco{0,1}$. 
%	More explicitly, if both were
%	zero, then we would have
%	\[
%	x\ge \frac{mb}{2}
%	\qquad\text{and}\qquad
%	1-x\ge \frac{mb}{2},
%	\]
%	which would imply
%	\[
%	1\ge mb,
%	\]
%	contradicting $mb>1$. 
Thus
$$
F_m(x)=a(x)+c(x)\myexp{i\alpha}.
$$
is a nontrivial nonnegative linear combination of $1$ and
$\myexp{i\alpha}$. Since $0<\alpha<\pi$, these two vectors span a sector
of angle strictly smaller than $\pi$. Consequently,
$F_m(x)\neq0,
$ for $0 \le x<1.
$ Define $\theta_m(x)$ as the unique argument of $F_m(x)$ in $[0, \alpha]$.
% If $ m\ge3$, the following derivatives exist everywhere. If \(m=2\), they exist except at finitely many points. 
Then 
$$
a'(x)\le0,
\qquad
c'(x)\ge0
$$
for all $x$ in the domain of differentiability.
Hence
$$
\begin{aligned}
	\operatorname{Im}\!
	\left(
	F_{m}'(x)\overline{F_{m}(x)}
	\right)
	&=
	\operatorname{Im}
	\left[
	(a'(x)+c'(x)\myexp{i\alpha})
	(a(x)+c(x)\myexp{-i\alpha})
	\right]
	\\
	&=
	\bigl(a(x)c'(x)-a'(x)c(x)\bigr)\sin\alpha.
\end{aligned}
$$
Since
% $$
% a(x),c(x)\ge0,
% \qquad
% a'(x)\le0,
% \qquad
% c'(x)\ge0,
% $$
%and
$\sin\alpha>0$, it follows that
$$
\operatorname{Im}\!
\left(
F_{m}'(x)\overline{F_{m}(x)}
\right)\ge0.
$$
% Thus, for a continuous branch
% $$
% \theta_m(x)=\arg F_m(x),
% $$
Thus, 
$$
\begin{aligned}
    \theta_m'(x)
	=
	\operatorname{Im}
	\left(
	\frac{F_{m}'(x)}{F_{m}(x)}
	\right)
	=
	\frac{
		\operatorname{Im}\!
		\left(
		F_{m}'(x)\overline{F_{m}(x)}
		\right)
	}{
		|F_{m}(x)|^2
	}
	\ge0,
\end{aligned}
$$
on every interval of differentiability, and continuity of \(\theta_m\) across the finitely many breakpoints implies that \(\theta_m\) is nondecreasing on \([0,1]\).
At the endpoints,
$$
F_{m}(0)=B_m(0)>0,
\qquad
F_{m}(1)=B_m(0)\myexp{i\alpha}.
$$
Hence we may choose the continuous branch on $[0,1]$ so that
$$
\theta_m(0)=0,
\qquad
\theta_m(1)=\alpha.
$$
The quasi-periodicity
$
F_{m}(x+1)=\myexp{i\alpha}F_{m}(x),
$
then permits the extension
$$
\theta_m(x+1)
=
\theta_m(x)+\alpha.
$$
This yields a continuous nondecreasing branch on all of $\mathbb R$.

\emph{Induction step.}
Assume that, for some $n\ge m$, $F_{n}(\cdot)$ is nonzero on
$\mathbb R$, and that there exists a continuous nondecreasing branch
$$
\theta_n(x)=\arg F_n(x)
$$
satisfying
$$
\theta_n(x+1)
=
\theta_n(x)+\alpha.
$$
We prove the same assertions for $F_{n+1}(\cdot)$.
Using
$
B_{n+1}=B_n*B_1,
$
we obtain
\begin{align}\label{eq:F_n+1}
	F_{n+1}(x)
	&=
	\sum_{k\in\mathbb Z}
	B_{n+1}\bigl(\lambda(x-k)\bigr)\myexp{i\alpha k}
	\nonumber \\
	&=
	\lambda
	\int_{-1/(2\lambda)}^{1/(2\lambda)}
	\sum_{k\in\mathbb Z}
	B_n\bigl(\lambda(x-u-k)\bigr)\myexp{i\alpha k}\,du
	\nonumber \\
	&=
	\lambda
	\int_{-1/(2\lambda)}^{1/(2\lambda)}
	F_n(x-u)\,du
	\nonumber \\
	&=
	\lambda
	\int_{x-1/(2\lambda)}^{x+1/(2\lambda)}
	F_n(y)\,dy.
\end{align}

Set
$$
p=x-\frac1{2\lambda},
\qquad
q=x+\frac1{2\lambda}.
$$
By the induction hypothesis, $\theta_n(\cdot)$ is
nondecreasing. Thus, for $p\le y\le q$,
$$
\theta_n(p)
\le
\theta_n(y)
\le
\theta_n(q).
$$
Since $q<p+1$,
$$
\theta_n(q)
\le
\theta_n(p+1)
=
\theta_n(p)+\alpha.
$$
Therefore
$$
0\le
\theta_n(q)-\theta_n(p)
\le\alpha<\pi.
$$
Hence the vectors \(F_n(y)\), \(y\in[p,q]\), lie in a sector of width strictly smaller than \(\pi\).
Setting
$$
\phi(x)
=
\frac{
	\theta_{n}(p)
	+
	\theta_{n}(q)
}{2},
$$
we obtain that for every \(y\in[p,q]\),
$
\left|
\theta_{n}(y)-\phi(x)
\right|
<\frac{\pi}{2},
$
and hence
$$
\operatorname{Re}
\left(
\myexp{-i\phi(x)}F_n(y)
\right)>0.
$$
Therefore
$$
\operatorname{Re}
\left(
\myexp{-i\phi(x)}F_{n+1}(x)
\right)
=
\lambda
\int_p^q
\operatorname{Re}
\left(
\myexp{-i\phi(x)}F_{n}(y)
\right)\,dy
>0.
$$
Thus \(F_{n+1}(x)\neq0\), and its argument lies in
$$
\bigl[
\theta_{n}(p),
\theta_{n}(q)
\bigr].
$$
We therefore define $\theta_{n+1}(x)$ as the unique real argument of $F_{n+1}(x)$ in this interval. Since $F_{n+1}(x)$ and the endpoints of the interval depend continuously on $x$, this defines a continuous branch satisfying
$$
\theta_{n}(p)
\le
\theta_{n+1}(x)
\le
\theta_{n}(q).
$$
%%%
% \operatorname{Re}
% \left(
% \myexp{-i\phi}F_n(y)
% \right)>0, \quad \text{ for all } p \leq y \leq q.
% $$
% Consequently,
% $$
% \operatorname{Re}
% \left(
% \myexp{-i\phi}
% \int_p^qF_n(y)\,dy
% \right)>0.
% $$
% and therefore
% $$
% F_{n+1}(x)\neq0.
% $$
% Moreover, since
% $$
% \{F_n(y):y\in\itvcc{p,q}\}
% $$
% is contained in the sector bounded by the angles
% $$
% \theta_{n}(p)
% \quad\text{and}\quad
% \theta_{n}(q),
% $$
% the nonzero integral in \eqref{eq:F_n+1} lies in the same sector. Since this sector has width strictly smaller than \(\pi\), there is a unique real argument of \(F_{n+1, \nu}(x)\) in
% $$
% [\theta_{n}(p),\theta_{n}(q)].
% $$
% We define \(\theta_{n+1}(x)\) to be this argument. The uniqueness and continuous dependence of the sector and of \(F_{n+1, \nu}\) on \(x\) imply that \(\theta_{n+1, \nu}\) is continuous.
% We may therefore choose
% a continuous branch $\theta_{n+1}$ satisfying
% $$
% \theta_{n, \nu}(p)
% \le
% \theta_{n+1, \nu}(x)
% \le
% \theta_{n, \nu}(q).
% $$
When $x$ is replaced by $x+1$,  the corresponding sector is shifted by exactly
$\alpha$. Together with the quasi-periodicity of $F_{n+1}$, we get
$$
\theta_{n+1}(x+1)
=
\theta_{n+1}(x)+\alpha.
$$
It remains to prove that this branch is nondecreasing. From \eqref{eq:F_n+1} it follows that 
$$
F_{n+1}'(x)
=
\lambda
\left(
F_{n}(q)-F_{n}(p)
\right).
$$
Hence
$$
\begin{aligned}
	\operatorname{Im}
	\left(
	F_{n+1}'(x)
	\overline{F_{n+1}(x)}
	\right)
	=
	\lambda^2
	\int_p^q
	\operatorname{Im}
	\left[
	\left(
	F_{n}(q)-F_{n}(p)
	\right)
	\overline{F_n(y)}
	\right]\,dy.
\end{aligned}
$$
Writing 
$
F_n(y)
=
|F_n(y)|\myexp{i\theta_n(y)},
$
we get 
$$
\begin{aligned}
	\operatorname{Im}
	\left(
F_n(q)\overline{F_n(y)}
	\right)
	=
|F_n(q)F_n(y)|
	\sin\!
	\left(
	\theta_{n}(q)-\theta_{n}(y)
	\right)
	\ge0,
\end{aligned}
$$
because
$
0\le
\theta_{n}(q)-\theta_{n}(y)
\le\alpha<\pi.
$
Similarly,
$$
\begin{aligned}
	-\operatorname{Im}
	\left(
F_n(p)\overline{F_n(y)}
	\right)
=|F_n(p)F_n(y)|
	\sin\!
	\left(
	\theta_{n}(y)-\theta_{n}(p)
	\right)
	\ge0.
\end{aligned}
$$
It follows that
$$
\operatorname{Im}
\left[
\left(
F_{n}(q)-F_{n}(p)
\right)
\overline{F_n(y)}
\right]\ge0, \quad y\in[p,q]. $$
Consequently,
$$
\operatorname{Im}
\left(
F_{n+1}'(x)
\overline{F_{n+1}(x)}
\right)\ge0.
$$
Since $F_{n+1}(x)\neq0$,

$$
\begin{aligned}
    \frac{d}{dx}\theta_{n+1}(x)
	=
	\operatorname{Im}
	\left(
	\frac{F_{n+1}'(x)}
	{F_{n+1}(x)}
	\right)
	=
	\frac{
		\operatorname{Im}
		\left(
		F_{n+1}'(x)
		\overline{F_{n+1}(x)}
		\right)
	}{
		|F_{n+1}(x)|^2
	}
	\ge0.
\end{aligned}
$$
 Hence
$\theta_{n+1}(\cdot)$ is nondecreasing.
This completes the induction step.
\end{proof}

By symmetry of the Zak transform, the \Cref{thm:zeros-zak-bspline-medium-lambda} implies that for $1<\lambda<n$, the Zak transform $Z_\lambda B_n$ has no zeros in the domain $\itvco{0,1}\times \itvoo{1/2,1}$ either. However, we still need to consider the boundary of the reduced fundamental domain $\itvcc{0,1/2}^2$ as this is not covered by \Cref{thm:zeros-zak-bspline-medium-lambda}.
This is the subject of the next section.

\section{The Zak transform $Z_{\lambda} B_n$ along lines}
\label{sec:zeros-boundary-lines}

In \cite{MR3572909} it was shown that, for $\lambda < 2/3$, the Zak transform $Z_\lambda B_n$ vanishes on $\round{1/\lambda}-1$ horizontal lines in the fundamental domain $\itvco{0,1}^2$ whenever $1/\lambda$ is sufficiently close to an integer larger than or equal to two. Our first result gives a characterization of the zero set of $Z_\lambda B_n$ along these lines.

\begin{proposition}
    \label{thm:zeros-zak-bspline}
Let $n \ge 2$ be an integer and $0<\lambda < 1$ a real number. Set $R=\round{1/\lambda}$ and $F=\sfrac{1/\lambda}$. Fix  $\nu = \frac{s}{R}, s=1,\ldots,R-1$.
    Then the following holds.
    \begin{enumerate}[label=(\roman*)]
        \item If $F=0$, then
        \[
            Z_{\lambda}B_n(x,\nu)=0,
            \qquad x\in\R.
        \]
		\label{itm:zeros-zak-bspline-F=0}
        \item Suppose $0<\abs{F}\le\frac1n$.
        If $nR$ is even, then, for $x\in\itvco{0,1}$,
        \[
            Z_{\lambda}B_n(x,\nu)=0
            \qquad\Longleftrightarrow\qquad
            x\in
            \itvcc*{
                \frac{n\abs F}{2},
                1-\frac{n\abs F}{2}
            }.
        \]
        If $nR$ is odd, then, for $x\in\itvco{0,1}$,
        \[
            Z_{\lambda}B_n(x,\nu)=0
            \quad\Longleftrightarrow\quad
            x\in
            \itvcc*{
                0,\frac{1-n\abs F}{2}
            }
            \cup
            \itvco*{
                \frac{1+n\abs F}{2},1
            }.
        \]
		\label{itm:zeros-zak-bspline-0<|F|<=1/n}
        \item Suppose $\frac1n<\abs F\le\frac12$.
If $\nu \in \set{1/R,2/R, \dots, (R-1)/R} \setminus \{\frac12\}$, then
\[
    Z_{\lambda}B_n(x,\nu)\neq 0,
    \qquad x\in\itvco{0,1}.
\]
If $\nu=\frac12$, then, for $x\in\itvco{0,1}$,
\[
    Z_{\lambda}B_n(x,1/2)=0
    \quad\Longleftrightarrow\quad
    x=\frac12.
\]
		\label{itm:zeros-zak-bspline-1/n<F<=1/2}
    \end{enumerate}
\end{proposition}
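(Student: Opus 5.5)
The plan is to reduce everything to the aliasing lemma, \Cref{lem:aliasing}, and then read off the zeros from \Cref{thm:zeros-zak-bspline-large-lambda} and \Cref{thm:zeros-zak-bspline-medium-lambda}. Put $b=1/\lambda$, so $Z_\lambda B_n = Z_{1/b}B_n$. Take $M=R=\round{1/\lambda}$, which gives $b=M+F$. For $\nu=s/R$ we have $M\nu=s\in\Z$ and $\nu\in(0,1)$, so the hypotheses of the aliasing lemma hold with $d=\abs F$.

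If $F=0$, the lemma gives $Z_\lambda B_n(\cdot,\nu)\equiv 0$ directly, which is \ref{itm:zeros-zak-bspline-F=0}. If $F\neq 0$, the lemma gives
\[
Z_\lambda B_n(x,\nu)=C\,Z_{1/d}B_n(x,\nu)\quad\text{or}\quad Z_\lambda B_n(x,\nu)=C\,\myexp{\pi i\nu}Z_{1/d}B_n(x-\tfrac12,\nu),
\]
according as $nR$ is even or odd, with $C\neq0$. So the zero set of $Z_\lambda B_n(\cdot,\nu)$ is the zero set of $Z_{1/d}B_n(\cdot,\nu)$, shifted by $1/2$ modulo $1$ in the odd case.

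For \ref{itm:zeros-zak-bspline-0<|F|<=1/n}, the condition $0<d\le 1/n$ means $1/d\ge n$, so \Cref{thm:zeros-zak-bspline-large-lambda} applies with $\lambda$ replaced by $1/d$. Its zero set in $\itvco{0,1}$ is $\itvcc{nd/2,\,1-nd/2}$ for every $\nu$, which settles the even case. In the odd case, $Z_\lambda B_n(x,\nu)=0$ holds iff $x-\tfrac12$ lies modulo $1$ in $\itvcc{nd/2,1-nd/2}$. Using quasi-periodicity, zeros are preserved under integer translation. For $x\in\itvco{0,1}$ this condition becomes $x\in\itvcc{0,(1-nd)/2}\cup\itvco{(1+nd)/2,1}$, and I will check the endpoints carefully.

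For \ref{itm:zeros-zak-bspline-1/n<F<=1/2}, the condition $1/n<d\le1/2$ means $2\le 1/d<n$, so in particular $1<1/d<n$. If $\nu\in(0,1/2)$, \Cref{thm:zeros-zak-bspline-medium-lambda} says $Z_{1/d}B_n(\cdot,\nu)$ has no zeros on $\R$. If $\nu\in(1/2,1)$, the symmetry $Z f(x,\nu)=\overline{Zf(x,-\nu)}$ together with $1$-periodicity in $\nu$ reduces this to $1-\nu\in(0,1/2)$. In both cases a shift in $x$ does not create zeros, so there are no zeros for $\nu\neq1/2$.

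The remaining case $\nu=1/2$, which requires $R$ even, is the main obstacle, because \Cref{thm:zeros-zak-bspline-medium-lambda} does not cover it. Here I need the fact that $Z_{\mu}B_n(\cdot,1/2)$ vanishes on $\itvco{0,1}$ only at $x=1/2$ for $1<\mu<n$; this is presumably established in the remainder of the boundary-line analysis. If I cannot cite it, I would prove it by the same induction as in \Cref{thm:zeros-zak-bspline-medium-lambda}. By \Cref{lem:real-valued-Zak_along-boundary}, $G_n(x)=Z_\mu B_n(x,1/2)$ is real, odd about $1/2$, and satisfies $G_n(x+1)=-G_n(x)$. The base case uses $m=\lfloor\mu\rfloor+1$, where $G_m(x)=B_m(\mu x)-B_m(\mu(1-x))$ on $\itvco{0,1}$; this is strictly decreasing on the overlap and has its single sign change at $1/2$. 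The induction step uses $G_{n+1}(x)=\mu\int_{x-1/(2\mu)}^{x+1/(2\mu)}G_n$. Writing $H_n(x)=G_n(x+\tfrac12)$, the claim is that $H_n$ is odd, negative on $(0,1)$, and antiperiodic. Averaging over a window of length $1/\mu<1$ then preserves positivity on $(-1,0)$ and negativity on $(0,1)$, since any such window centred in $(0,1)$ sees mostly negative mass. The delicate point is showing that the integral over the window is strictly negative. I would handle it by pairing $y$ with $-y$ using oddness, which leaves an integral over a subinterval of $(0,1)$ where $H_n<0$.

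Finally, in \ref{itm:zeros-zak-bspline-1/n<F<=1/2} with $\nu=1/2$ and $nR$ odd, the shift by $1/2$ sends the zero at $1/2$ to $0$. The statement claims the zero is at $x=1/2$, so this subcase has to be excluded or reconciled. Since $R$ is even when $\nu=1/2$, $nR$ is always even, and the issue does not arise.
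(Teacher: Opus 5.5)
Your proposal is correct and follows the paper's proof essentially step for step. The paper also applies the aliasing lemma with $M=R$ and $d=\abs F$, uses \Cref{thm:zeros-zak-bspline-large-lambda} (shifted by $1/2$ when $nR$ is odd) for $d\le 1/n$, and uses \Cref{thm:zeros-zak-bspline-medium-lambda} plus conjugation symmetry for $\nu\neq 1/2$ when $d>1/n$. For $\nu=1/2$, the fact you need is the sign pattern \eqref{eq:Z_mu_B_n_1/2} proved inside \Cref{lem:non-zero-Zak_along-boundary_onehalf_x}, and your fallback induction is that same argument shifted by $1/2$; the paper restricts to $0<x<1/2$ so that only one pairing near an endpoint of the window is needed, and you can do the same since $H_n(1-y)=H_n(y)$.
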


\begin{proof}
    Since $R\nu=s\in\Z$, we may apply \Cref{lem:aliasing} with $M=R$. Set $d=\abs F$.

	If $F=0$, then the last assertion of \Cref{lem:aliasing} gives
    \[
        Z_{\lambda}B_n(\cdot,\nu) = 0,
    \]
    which proves \ref{itm:zeros-zak-bspline-F=0}.

    Suppose now that $F\neq0$. Assume first that
    $ 0<d\le\frac1n$.
    Then
    $\frac1d\ge n$.
    Hence, by \Cref{thm:zeros-zak-bspline-large-lambda},
    \[
        Z_{1/d}B_n(x,\nu)=0
        \quad\Longleftrightarrow\quad
        x\in
        \itvcc*{
            \frac{nd}{2},
            1-\frac{nd}{2}
        }.
    \]
    If $nR$ is even, the first identity in \eqref{eq:aliasing-lemma} gives the first
    assertion in \ref{itm:zeros-zak-bspline-0<|F|<=1/n}. If $nR$ is odd, the second identity in \eqref{eq:aliasing-lemma}
    shifts this interval by $1/2$. Restricting to
    $x\in\itvco{0,1}$ gives
    \[
        \itvcc*{
            0,\frac{1-nd}{2}
        }
        \cup
        \itvco*{
            \frac{1+nd}{2},1
        }.
    \]
    This proves \ref{itm:zeros-zak-bspline-0<|F|<=1/n}.

    Finally, suppose that $\frac1n<d\le\frac12$.
    Then $2\le\frac1d<n$.
	For $0<\nu<1/2$, the non-vanishing result for
    $1<1/d<n$ in \Cref{thm:zeros-zak-bspline-medium-lambda} with $\lambda=1/d$ gives
    \[
        Z_{1/d}B_n(x,\nu)\neq0,
        \qquad x\in\R.
    \]
    By symmetry of the Zak transform,
    the same holds for $1/2<\nu<1$.

    It remains only to consider $\nu=1/2$. In this case $R$ is even,
    hence $nR$ is even and there is no translation in \eqref{eq:aliasing-lemma}. Moreover,
    since $1<1/d<n$,
    \[
        Z_{1/d}B_n(x,1/2)=0
        \quad\Longleftrightarrow\quad
        x=\frac12.
    \]
    This proves \ref{itm:zeros-zak-bspline-1/n<F<=1/2}.
\end{proof}

We now turn the boundary of the reduced fundamental domain $\itvcc{0,1/2}^2$. Hence, we consider the Zak transform restricted to the lines $x=0$, $x=1/2$, $\nu=0$, and $\nu=1/2$ in the $(x,\nu)$-plane. For $\lambda \ge n $, we already have a full characterization of the zeros of \(Z_\lambda B_n\) in \Cref{thm:zeros-zak-bspline-large-lambda}, hence we restrict our attention to \(0<\lambda<n\). 

\begin{proposition}
    \label{lem:non-zero-Zak_along-boundary}
Let \(n\ge2\). Then
\begin{enumerate}[label=(\roman*)]
\item If $0<\lambda<n$, then
$$
Z_\lambda B_n(x,0)\neq0
\qquad\text{for all } x\in\mathbb R,
$$
\label{itm:non-zero-Zak_along-boundary_nu=0}
\item If $\lambda > 1$, then
$$
Z_\lambda B_n(0,\nu)\neq0
\qquad\text{for all } \nu\in\mathbb R.
$$
\label{itm:non-zero-Zak_along-boundary_x=0}
\end{enumerate}
\end{proposition}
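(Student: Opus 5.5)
For part \ref{itm:non-zero-Zak_along-boundary_nu=0} I plan a direct positivity argument. For $\nu=0$ we have $Z_\lambda B_n(x,0)=\sqrt\lambda\sum_{k}B_n(\lambda(x-k))$, which is a sum of nonnegative terms. Since $B_n>0$ exactly on $\itvoo{-n/2,n/2}$, the term with index $k$ is strictly positive iff $\abs{x-k}<n/(2\lambda)$. Take $k=\round{x}$. Then $\abs{x-k}\le 1/2<n/(2\lambda)$ because $\lambda<n$. Hence at least one term is positive and the sum does not vanish.

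For part \ref{itm:non-zero-Zak_along-boundary_x=0} I will reduce to earlier results and treat $\nu=1/2$ by hand. By $1$-periodicity in $\nu$ and the symmetry \eqref{eq:Zak-symmetry}, $Z_\lambda B_n(0,\nu)=\overline{Z_\lambda B_n(0,-\nu)}$, so it suffices to take $\nu\in\itvcc{0,1/2}$. If $\lambda\ge n$, \Cref{thm:zeros-zak-bspline-large-lambda} gives nonvanishing, since $x=0\notin\itvcc{n/(2\lambda),1-n/(2\lambda)}$ (directly: only the $k=0$ term $B_n(0)>0$ survives). So assume $1<\lambda<n$. Then $\nu=0$ is covered by part \ref{itm:non-zero-Zak_along-boundary_nu=0}, and $\nu\in\itvoo{0,1/2}$ is covered by \Cref{thm:zeros-zak-bspline-medium-lambda}. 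The remaining case is $\nu=1/2$.

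For $\nu=1/2$ I will mimic the induction of \Cref{thm:zeros-zak-bspline-medium-lambda}. Put $G_n(x)=\lambda^{-1/2}Z_\lambda B_n(x,1/2)$, which is real by \Cref{lem:real-valued-Zak_along-boundary}. By quasi-periodicity and evenness it satisfies $G_n(x+1)=-G_n(x)$ and $G_n(1-x)=-G_n(x)$, so it is odd about $x=1/2$. The inductive claim is that $G_n>0$ on $\itvoo{-1/2,1/2}$, for all $n\ge m=\floor{\lambda}+1$ (and $m\le n$ since $\lambda<n$).

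Base case: as in that proof, $G_m(x)=B_m(\lambda x)-B_m(\lambda(1-x))$ on $\itvco{0,1}$. For $x\in\itvco{0,1/2}$ we have $0\le\lambda x<\lambda(1-x)$ and $\lambda x<\lambda/2<m/2$. Since $B_m$ is strictly decreasing on $\itvcc{0,m/2}$ and positive on $\itvco{0,m/2}$, this gives $G_m(x)>0$. Evenness then extends positivity to $\itvoo{-1/2,1/2}$.

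Induction step: $G_{n+1}(x)=\lambda\int_p^q G_n(y)\,dy$ with $p=x-\frac1{2\lambda}$ and $q=x+\frac1{2\lambda}$. For $x\in\itvco{0,1/2}$ we have $p>-1/2$, because $\frac1{2\lambda}<\frac12$. If $q>1/2$, the part of the integral over $\itvcc{1/2,q}$ cancels against the part over $\itvcc{1-q,1/2}$ by oddness about $1/2$. This leaves $\lambda\int_p^{\min(q,1-q)}G_n>0$, since $p<1-q\iff x<1/2$ and the integrand is positive on $\itvoo{-1/2,1/2}$. Evenness extends positivity to $\itvoo{-1/2,1/2}$. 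Evaluating at $x=0$ gives $Z_\lambda B_n(0,1/2)\neq0$.

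The main obstacle is this $\nu=1/2$ case. There the sector argument of \Cref{thm:zeros-zak-bspline-medium-lambda} breaks down because the angle $\alpha=\pi$, and the fix is the positivity/oddness induction just sketched.
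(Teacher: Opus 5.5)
Your proposal is correct and follows the paper's proof essentially step for step. Part (i) is the same positivity argument. Part (ii) uses the same reduction: the case $\lambda\ge n$ via the single $k=0$ term, $\nu=0$ via part (i), $\nu\in\itvoo{0,1/2}$ via \Cref{thm:zeros-zak-bspline-medium-lambda}, and conjugation symmetry for the rest. The only difference is at $\nu=1/2$, where you prove positivity of $G_n$ on $\itvoo{-1/2,1/2}$ inline instead of citing \Cref{lem:non-zero-Zak_along-boundary_onehalf_x}. Your induction is exactly the claim \eqref{eq:Z_mu_B_n_1/2} from that proof with $\mu=\lambda$; the aliasing step there is trivial for $\lambda>1$, since the nearest even integer to $1/\lambda$ is $0$. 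So your version simply avoids the forward reference.
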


\begin{proof} For \ref{itm:non-zero-Zak_along-boundary_nu=0}, let $0<\lambda<n$. For every $x\in\R$ there exists
    $k\in\Z$ such that $\abs{x-k}\le1/2$. Since
    \[
        \frac{n}{2\lambda}>\frac12,
    \]
    we have
    \[
        \abs{\lambda(x-k)}<\frac n2,
    \]
    and hence $B_n(\lambda(x-k))>0$. Since all terms in
    $Z_\lambda B_n(x,0)$ are non-negative, it follows that
    \[
        Z_\lambda B_n(x,0)>0.
    \]

    For \ref{itm:non-zero-Zak_along-boundary_x=0}, suppose first that $1<\lambda<n$. \Cref{thm:zeros-zak-bspline-medium-lambda} gives
    \[
        Z_\lambda B_n(0,\nu)\neq0,
        \qquad 0<\nu<1/2.
    \]
    The same holds for $1/2<\nu<1$ by complex conjugation. The cases
    $\nu=0$ and $\nu=1/2$ follow from \ref{itm:non-zero-Zak_along-boundary_nu=0} and
    \Cref{lem:non-zero-Zak_along-boundary_onehalf_x}, respectively.
    By periodicity in $\nu$, this proves the assertion for all $\nu\in\R$.

    If $\lambda\ge n$, only the term $k=0$ contributes at $x=0$, and
    therefore
    \[
        Z_\lambda B_n(0,\nu)
        =
        \sqrt{\lambda}\,B_n(0)>0, \qquad\text{for all } \nu\in\mathbb R.
    \]
\end{proof}

% \begin{proof}
% 	If \(\lambda\ge n/2\), then \(B_n(\lambda k)=0\) for \(k\neq0\), and hence
% $
% Z_\lambda B_n(0,\nu)=\sqrt{\lambda}\,B_n(0)>0.
% $
% Assume \(1<\lambda<n/2\). By the Poisson summation formula,
% $$
% Z_\lambda B_n(0,\nu)
% =\frac1{\sqrt{\lambda}}
% \sum_{m\in\mathbb Z}
% \sinc^n\!\left(\frac{m+\nu}{\lambda}\right),
% \qquad
% \sinc(t)=\frac{\sin(\pi t)}{\pi t}.
% $$
% If \(n\) is even, every term is nonnegative, while the \(m=0\) term is positive; hence \(Z_\lambda B_n(0,\nu)>0\).
% Let \(n\) be odd. Since terms with \(|m+\nu|\le\lambda\) are nonnegative, for $\nu \in [0, 1/2]$, we have 
% $$
% \begin{aligned}
% 	\sqrt{\lambda}\,Z_\lambda B_n(0,\nu)
% 	&\ge \left(\frac2\pi\right)^n
% 	-\frac{\lambda^n}{\pi^n}
% 	\sum_{|m+\nu|>\lambda}\frac1{|m+\nu|^n}\\
% 	&\ge
% 	\frac1{\pi^n}
% 	\left[
% 	2^n-2\left(1+\frac{\lambda}{n-1}\right)
% 	\right]>0,
% \end{aligned}
% $$
% where the last inequality follows from \(\lambda<n/2\). Thus  \(Z_\lambda B_n(0,\nu)\neq0\).
% \end{proof}

\begin{remark}
For \(0<\lambda<1\), the function \(Z_\lambda B_n(0,\nu)\) may vanish. In particular, if
$
\lambda=\frac1q,$ for $ q\ge2,
$
then 
$$
Z_{1/q}B_n\left(0,\frac rq\right)=
\sqrt q\sum_{m\in\mathbb Z}\sinc^n(qm+r)=0, \qquad r=1,\ldots,q-1.
$$
For even \(n\), these are precisely the zeros of \(Z_\lambda B_n(0,\nu)\) in the regime \(0<\lambda<1\); for odd \(n\), additional zeros may occur due to cancellation.
\end{remark}

\begin{proposition}
    \label{lem:non-zero-Zak_along-boundary_onehalf_x}
    Let $\lambda>0$ and $n\ge2$, and set $R = \round{1/\lambda}$ and $F = \abs{\sfrac{1/\lambda}}$. 
    \begin{enumerate}[label=(\roman*)]
        \item If $R$ is even, then, for $x\in\itvco{0,1}$,
        \[
            Z_\lambda B_n(x,1/2)=0
            \quad \Longleftrightarrow \quad
            x\in
            \set{1/2}
            \cup
            \itvcc*{\frac n2F,1-\frac n2F};
        \]
        \item If $R$ is odd, then, for $x\in\itvco{0,1}$,
        \[
            Z_\lambda B_n(x,1/2)=0
            \quad \Longleftrightarrow \quad
            x\in
            \set{1/2}
            \cup
            \itvcc*{
                \frac n2(1-F),
                1-\frac n2(1-F)
            }.
        \]
    \end{enumerate}
    where the interval is understood as empty if its left endpoint exceeds its right endpoint.
\end{proposition}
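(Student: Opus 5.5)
The plan is to reduce everything to the regime $1/d\ge 1$ via \Cref{lem:aliasing} at $\nu=1/2$, and then determine the zeros of $Z_\mu B_n(\cdot,1/2)$ for $\mu\ge1$ directly.

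\emph{Reduction.} Write $b=1/\lambda$. Since $\nu=1/2$, the lemma requires $M$ even, so that $M\nu\in\Z$. I take $M$ to be the even integer nearest to $b$.
\begin{itemize}
\item If $R$ is even, then $M=R$ and $d=F\in[0,1/2]$.
\item If $R$ is odd, then $M=R\pm1$ (whichever is nearer to $b$) and $d=1-F\in[1/2,1]$.
\end{itemize}
In both cases $nM$ is even, so \eqref{eq:aliasing-lemma} gives $Z_\lambda B_n(x,1/2)=C\,Z_{1/d}B_n(x,1/2)$ with $C\ne0$ and no shift in $x$. If $d=0$, which happens only for $R$ even and $F=0$, the lemma gives identically zero. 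This matches the claimed set, since the interval is then $[0,1]$.

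It therefore remains to show the following claim for $\mu=1/d\ge1$. The zero set of $Z_\mu B_n(\cdot,1/2)$ on $[0,1)$ is
\[
\set{1/2}\cup\itvcc*{\tfrac{n}{2\mu},1-\tfrac{n}{2\mu}}.
\]
For $\mu\ge n$ this is \Cref{thm:zeros-zak-bspline-large-lambda}; the interval contains $1/2$. For $1\le\mu<n$ the interval is empty, and I must show that $x=1/2$ is the only zero. The case $\mu=1$ is the classical result, but the argument below should cover it as well.

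\emph{Main step: $1\le\mu<n$.} Set $G_n(x)=\mu^{-1/2}Z_\mu B_n(x,1/2)$, which is real by \Cref{lem:real-valued-Zak_along-boundary}. From evenness of $B_n$ and the quasi-periodicity $G_n(x+1)=-G_n(x)$, the function $G_n$ is even about $0$ and about $1$, and it is odd about $1/2$. Hence $G_n(y)=g_n(\operatorname{dist}(y,2\Z))$ for a function $g_n$ on $[0,1]$ with $g_n(1-t)=-g_n(t)$. I will prove by induction on $n$, starting at $m=\lfloor\mu\rfloor+1$ exactly as in \Cref{thm:zeros-zak-bspline-medium-lambda}, that $g_n$ is strictly decreasing on $[0,1]$. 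Antisymmetry then forces the unique zero at $1/2$.

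\emph{Base case.} On $[0,1]$,
\[
G_m(x)=B_m(\mu x)-B_m(\mu(1-x)).
\]
The first term is nonincreasing and the second nondecreasing. They are never both zero, because $m/\mu>1$, and $B_m$ is strictly decreasing on its positive part. Hence $G_m$ is strictly decreasing on $[0,1]$.

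\emph{Induction step.} As in \eqref{eq:F_n+1}, with $h=1/(2\mu)\le1/2$,
\[
G_{n+1}(x)=\mu\int_{x-h}^{x+h}G_n(y)\,dy, \qquad G_{n+1}'(x)=\mu\bigl(G_n(x+h)-G_n(x-h)\bigr).
\]
For $x\in(0,1)$, an elementary case check shows $\operatorname{dist}(x+h,2\Z)>\operatorname{dist}(x-h,2\Z)=|x-h|$:
\begin{itemize}
\item if $x+h\le1$, this is $x+h>|x-h|$;
\item otherwise it reduces to $2-x-h>|x-h|$, which follows from $x<1$ and $h<1$.
\end{itemize}
Since $g_n$ is strictly decreasing, $G_{n+1}'<0$ on $(0,1)$, and the induction closes.

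The point I expect to need the most care is the precise bookkeeping of which even $M$ to choose when $R$ is odd, so that the resulting interval is $\itvcc{\frac n2(1-F),1-\frac n2(1-F)}$. The endpoint cases ($F=1/2$, and the non-strict edge at $\mu=1$, $h=1/2$) will need to be checked separately.
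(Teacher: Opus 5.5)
Your proof is correct, and its skeleton matches the paper's. You use the same reduction through \Cref{lem:aliasing} with the even integer nearest to $1/\lambda$; the paper treats both parities at once via $d=\operatorname{dist}(1/\lambda,2\Z)$, which is your $F$ resp.\ $1-F$. You handle the case $1/d\ge n$ by \Cref{thm:zeros-zak-bspline-large-lambda}, as the paper does. For $1\le\mu<n$ you run an induction on the order starting at $m=\floor{\mu}+1$, with the same two-term base case and the same averaging recursion.

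The difference is the inductive invariant. The paper carries only the sign pattern, $G_n>0$ on $(0,1/2)$ and $G_n<0$ on $(1/2,1)$. For $0<x<1/2$ with $x+h>1/2$, it uses $G_n(1-y)=-G_n(y)$ to cancel the integral over $[1-x-h,x+h]$, which leaves $\int_{x-h}^{1-x-h}G_n>0$. You instead carry strict monotonicity of $g_n$ on $[0,1]$. You close the step by the pointwise comparison $\operatorname{dist}(x+h,2\Z)>\operatorname{dist}(x-h,2\Z)$ inside $G_{n+1}'(x)=\mu\bigl(G_n(x+h)-G_n(x-h)\bigr)$. This step uses only evenness and $2$-periodicity; antisymmetry enters only at the end, to place the zero at $1/2$.

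Your route gives strictly more: monotonicity of $x\mapsto Z_\mu B_n(x,1/2)$ on $[0,1]$, not just its sign. It also parallels the argument-monotonicity proof of \Cref{thm:zeros-zak-bspline-medium-lambda}. The paper's route avoids differentiation and proves exactly what is needed.

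The edge cases you flagged need no separate treatment. Your distance comparison only requires $h<1$, so $\mu=1$ (where $h=1/2$) is covered. $F=1/2$ merely gives $d=1/2$, which falls under the same dichotomy $1/d\ge n$ versus $1/d<n$.
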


\begin{proof}
	The two assertions can be combined into a single statement. Set
    \[
        d=\operatorname{dist}\left(\frac1\lambda,2\Z\right).
    \]
    Then we have to show that, for $x\in\itvco{0,1}$,
    \begin{equation}		
		\label{eq:zeros-zak-bspline-onehalf-combined}
        Z_\lambda B_n(x,1/2)=0
        \quad \Longleftrightarrow \quad
        x\in
        \set{1/2}
        \cup
        \itvcc*{\frac{nd}{2},1-\frac{nd}{2}},
    \end{equation}
   Let $b=\frac1\lambda$,
    and choose $M\in2\Z$ such that
    $d=\abs{b-M}$. Since $M/2 \in\Z$,
    we may apply \Cref{lem:aliasing} with $\nu=1/2$. Moreover, $M$
    is even, and hence $nM$ is even. Thus, if $d>0$,
    \begin{equation}
		\label{eq:aliasing-lemma-1/2-even}
		Z_\lambda B_n(x,1/2)
		=
		C\,Z_{1/d}B_n(x,1/2),
		\qquad C\neq0.
	\end{equation}
    If $d=0$, \Cref{lem:aliasing} gives
    \[
        Z_\lambda B_n(\cdot,1/2)\equiv0,
    \]
    which agrees with \eqref{eq:zeros-zak-bspline-onehalf-combined}.

    Suppose first that $0<d\le\frac1n$.
    Then $1/d\ge n$, and hence
    \Cref{thm:zeros-zak-bspline-large-lambda} gives
    \[
        Z_{1/d}B_n(x,1/2)=0
        \qquad\Longleftrightarrow\qquad
        x\in
        \itvcc*{\frac{nd}{2},1-\frac{nd}{2}}.
    \]
    Together with \eqref{eq:aliasing-lemma-1/2-even}, this proves the assertion \eqref{eq:zeros-zak-bspline-onehalf-combined} in this case.

    It remains to consider $d>\frac1n$.
    Then $1\le\frac1d<n$.
    We claim that, for every $1\le\mu<n$,
	\begin{equation}
		\label{eq:Z_mu_B_n_1/2}
        Z_\mu B_n(x,1/2)
        \begin{cases}
            >0,&0<x<1/2,\\
            =0,&x=1/2,\\
            <0,&1/2<x<1.
        \end{cases}
	\end{equation}	
    To see this, let
    \[
        m=\floor{\mu}+1.
    \]
    Then $m\le n$ and
    \[
        1<\frac m\mu\le2.
    \]
    Hence, for $0<x<1$, only the terms $k=0$ and $k=1$ can occur in
    $Z_\mu B_m(x,1/2)$, and therefore
    \[
        Z_\mu B_m(x,1/2)
        =
        \sqrt{\mu}
        \left(
            B_m(\mu x)
            -
            B_m(\mu(1-x))
        \right).
    \]
    Since $B_m$ is even and strictly decreasing on
    $\itvcc{0,m/2}$, this expression is positive for
    $0<x<1/2$, vanishes at $x=1/2$, and is negative for
    $1/2<x<1$.

    The assertion \eqref{eq:Z_mu_B_n_1/2} is preserved when the order is increased. Indeed, writing
    \[
        G_n(x)=Z_\mu B_n(x,1/2),
    \]
    we have
    \begin{equation}
		\label{eq:Z_mu_B_n_1/2_induction}
		G_{n+1}(x)
        =
        \mu
        \int_{x-\frac1{2\mu}}^{x+\frac1{2\mu}}
        G_n(y)\,dy.
	\end{equation}

    Moreover,
	\[
		G_n(-x)=G_n(x), \qquad G_n(1-x)=-G_n(x).
	\]
	Thus, by evenness and the induction hypothesis,
	\[
		G_n(x)>0, \qquad -\frac12<x<\frac12.
	\]
	If $0<x<1/2$, set
	\[
		p=x-\frac1{2\mu}, \qquad q=x+\frac1{2\mu}.
	\]
	Since $\mu\ge1$, we have
	\[
		-\frac12<p<q<1.
	\] 
	If $q\le1/2$, then $G_n>0$ on the interior of
    $\itvcc{p,q}$, and hence the integral in \eqref{eq:Z_mu_B_n_1/2_induction} is positive.
    If $q>1/2$, the antisymmetry gives
    \[
        \int_{1-q}^{1/2}G_n(y)\,dy
        +
        \int_{1/2}^{q}G_n(y)\,dy
        =0,
    \]
    and therefore
    \[
        \int_p^qG_n(y)\,dy
        =
        \int_p^{1-q}G_n(y)\,dy>0,
    \]
    since
    \[
        p<1-q
        \qquad\Longleftrightarrow\qquad
        x<\frac12.
    \]
    This proves \eqref{eq:Z_mu_B_n_1/2} by induction.

    Applying \eqref{eq:Z_mu_B_n_1/2} with $\mu=1/d$ and using \eqref{eq:aliasing-lemma-1/2-even}, we conclude that
    $x=1/2$ is the unique zero when $d>1/n$. Since in this case
    \[
        \itvcc*{\frac{nd}{2},1-\frac{nd}{2}}
    \]
    is empty, the asserted formula \eqref{eq:zeros-zak-bspline-onehalf-combined} follows.
    % Finally, if
    % \[
    %     R=\round{b},
    %     \qquad
    %     F=\abs{\sfrac b},
    % \]
    % then the distance from $b$ to the nearest even integer is
    % \[
    %     d=
    %     \begin{cases}
    %         F,&R\text{ even},\\
    %         1-F,&R\text{ odd}.
    %     \end{cases}
    % \]
    % Substituting these expressions for $d$ gives
    % \ref{item:R-even} and \ref{item:R-odd}.
\end{proof}

We postpone the investigation of zeros of the Zak transform along the line $x = 1/2$ to~\Cref{sec:zeros-set-Bn} and \Cref{sec:palindromic-polynomials} as this requires a more detailed analysis. %In fact, for higher order splines, i.e., $n > 2$, we will only give a complete characterization of the zeros of $Z_\lambda B_n$ along the line $x = 1/2$ for $\lambda \ge 1$.

\section{The zero set of the Zak transform $Z_\lambda B_n$}
\label{sec:zeros-set-Bn}

For $\lambda\ge1$, the preceding results on the interior and the boundary of the reduced fundamental domain combine to give a complete description
of the zero set: 

\begin{corollary}
    \label{cor:zero-set-Zak-B_n-for-lambda-ge-1}

    Let $n\ge2$ and $\lambda\ge1$. Then, for
    $(x,\nu)\in\itvco{0,1}^2$,
    \[
        Z_\lambda B_n(x,\nu)=0
        \quad\Leftrightarrow\quad
        \begin{cases}
            (x,\nu)=(1/2,1/2),
            & 1\le\lambda<n,\\[2mm]
            x\in\itvcc{\frac{n}{2\lambda},\,1-\frac{n}{2\lambda}}
            \text{ and }\nu\in\itvco{0,1},
            & \lambda\ge n.
        \end{cases}
    \]
\end{corollary}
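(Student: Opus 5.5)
The case $\lambda\ge n$ is exactly \Cref{thm:zeros-zak-bspline-large-lambda}, so nothing needs to be proved there. For $1\le\lambda<n$, I would split the fundamental domain $\itvco{0,1}^2$ by the value of $\nu$ and use the symmetries \eqref{eq:Zak-symmetry} together with quasi-periodicity \eqref{eq:zak-quasi-periodic}. The plan is to show that no zeros occur for $\nu\neq 1/2$, and that on the line $\nu=1/2$ the only zero is at $x=1/2$. We already know $Z_\lambda B_n(1/2,1/2)=0$ for every $\lambda>0$.

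\emph{Interior lines $\nu\in\itvoo{0,1/2}\cup\itvoo{1/2,1}$.} For $1<\lambda<n$, \Cref{thm:zeros-zak-bspline-medium-lambda} gives $Z_\lambda B_n(x,\nu)\ne0$ for all $x\in\R$ and $\nu\in\itvoo{0,1/2}$. For $\nu\in\itvoo{1/2,1}$ I would use $Z_\lambda B_n(x,\nu)=\overline{Z_\lambda B_n(x,-\nu)}=\overline{Z_\lambda B_n(x,1-\nu)}$, which uses periodicity in $\nu$ and puts $1-\nu$ in $\itvoo{0,1/2}$. The endpoint $\lambda=1$ is not covered by \Cref{thm:zeros-zak-bspline-medium-lambda}, which assumes $\lambda>1$. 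I would settle it by quoting the classical result \cite{MR1113840} that $Z_1B_n$ has the unique zero $(1/2,1/2)$, which already yields the corollary for $\lambda=1$. Alternatively, I would check that the base case of that proof still works with $m=2$ and $m/\lambda=2$: the two terms $B_2(x)$ and $B_2(1-x)$ are then never simultaneously zero on $\itvco{0,1}$, so the same argument goes through.

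\emph{Line $\nu=0$.} Since $\lambda<n$, \Cref{lem:non-zero-Zak_along-boundary}\ref{itm:non-zero-Zak_along-boundary_nu=0} gives $Z_\lambda B_n(x,0)>0$, so there are no zeros on this line.

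\emph{Line $\nu=1/2$.} I would apply \Cref{lem:non-zero-Zak_along-boundary_onehalf_x}, or more directly its combined form \eqref{eq:zeros-zak-bspline-onehalf-combined}, with $d=\operatorname{dist}(1/\lambda,2\Z)$. Since $1/\lambda\in\itvoc{1/n,1}$, we have $d=1/\lambda$, because the nearest even integer is $0$; note that $1/\lambda\le1$ and $2-1/\lambda\ge1$. The interval $\itvcc{\frac{nd}{2},1-\frac{nd}{2}}$ is nonempty only if $nd\le1$, that is $\lambda\ge n$, which is excluded. Hence the only zero on this line is $x=1/2$. Equivalently, one could apply \eqref{eq:Z_mu_B_n_1/2} directly with $\mu=\lambda$, since $1\le\lambda<n$.

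\emph{Main obstacle.} The step I expect to be the only delicate point is $\lambda=1$ on the interior lines. I would handle it by the citation above, since the medium-$\lambda$ theorem was stated only for $\lambda>1$.
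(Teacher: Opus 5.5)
Your proposal is correct and is essentially the paper's proof. The paper also treats $\lambda=1$ as the classical case and $\lambda\ge n$ via \Cref{thm:zeros-zak-bspline-large-lambda}. For $1<\lambda<n$ it combines \Cref{thm:zeros-zak-bspline-medium-lambda}, \Cref{lem:non-zero-Zak_along-boundary}, \Cref{lem:non-zero-Zak_along-boundary_onehalf_x} and the Zak symmetries; your write-up just makes explicit the reflection for $\nu\in\itvoo{1/2,1}$ and the computation $d=1/\lambda$ on the line $\nu=1/2$.

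If you pursue your alternative for $\lambda=1$, you also need the induction step of \Cref{thm:zeros-zak-bspline-medium-lambda}, not only the base case. That step uses $q<p+1$, which becomes $q=p+1$ at $\lambda=1$, but it still gives $\theta_n(q)-\theta_n(p)\le\alpha$, so the argument goes through.
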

\begin{proof}
	The case $\lambda=1$ is the classical case. For $1<\lambda<n$, the assertion follows from
    \Cref{thm:zeros-zak-bspline-medium-lambda,lem:non-zero-Zak_along-boundary,lem:non-zero-Zak_along-boundary_onehalf_x} and the symmetries of the
    Zak transform. For $\lambda\ge n$, the assertion
    follows from \Cref{thm:zeros-zak-bspline-large-lambda}.
\end{proof}

For $\lambda<1$, the zero set is considerably richer. We restrict our
attention to the hat spline $B_2$, for which the zeros can still be
described explicitly. Besides the rational frequencies $\nu=\frac{s}{R}$ from \Cref{thm:zeros-zak-bspline},
which give intervals of zeros in the $x$-variable, there is a second
family of isolated zeros on the line $x=1/2$, determined by a
cotangent equation. The case $\nu=0$ has already been treated in \Cref{lem:non-zero-Zak_along-boundary}, where it was shown that
$$
Z_\lambda B_2(x,0)\neq0
\qquad\text{for all }x\in\R.
$$
Hence, in the following theorem we restrict to $\nu\in(0,1)$.

\begin{theorem}
	\label{thm:char-zak-B2-lambda-le-1}
    Let $0<\lambda < 1$, $R = \round{1/\lambda}$, and $F = {\sfrac{1/\lambda}}$.    
%    Assume that $b=R+F$, where $R>1$ and $0< F <\frac12$. 
   For $\nu\in(0,1)$, the following assertions are true:
    \begin{enumerate}[label=(\roman*)]
        \item For $x\in\itvco{0,|F|}\cup \itvoo{1-|F|, 1}$, the Zak transform  $Z_\lambda B_2(x , \nu ) $ has no zero. 
        \item For $x\in \itvcc{|F|,1-|F|}$,  the zeros are characterized as follows
        \begin{enumerate}[label=\alph*)]
            \item  If $\nu\in\left\{\frac{s}{R}:s=1,\dots, R-1\right\},$ then 
             $Z_\lambda B_2(x , \nu ) = 0$  for every $x\in \itvcc{|F|,1-|F|}$. 
            \item If $\nu\notin\left\{\frac{s}{R}:s=1,\dots, R-1\right\},$ then   $Z_\lambda B_2(x , \nu ) $ is zero if and only if 
            $$x = \frac12, \quad\text{ and } \cot(\pi \nu) + 2F \cot(\pi R \nu)=0.$$
         %     In particular if $R$ is odd, then 
         % $$Z_{\lambda}B_2(x,\frac12)=0 \Longleftrightarrow x=\frac12.$$
         \end{enumerate}
    \end{enumerate}
\end{theorem}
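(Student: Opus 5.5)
Throughout, write $b=1/\lambda=R+F$ and $z=\myexp{2\pi i\nu}$, and work with the unnormalised sum
\[
\Phi(x)=\lambda^{-1/2}Z_\lambda B_2(x,\nu)=\sum_{k\in\Z}\bigl(1-\lambda\abs{x-k}\bigr)_+z^k .
\]
The plan rests on one structural fact. Each term of $\Phi$ is a hat in $x$ with kinks only at $k$ and $k\pm b$. Reduced modulo $1$, these kinks sit at $0$, $\abs F$ and $1-\abs F$. Hence $\Phi$ is \emph{affine} in $x$ on each of $\itvcc{0,\abs F}$, $\itvcc{\abs F,1-\abs F}$ and $\itvcc{1-\abs F,1}$. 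On each piece the set of indices $k$ with $\abs{x-k}<b$ is fixed, so the slope and intercept are explicit finite geometric sums in $z$. The case $F=0$ makes the outer pieces empty and gives zero on the whole line for $\nu=s/R$ by \Cref{thm:zeros-zak-bspline}; so I assume $F\neq0$, and by the symmetry $b\mapsto$ reflection I treat $F>0$ in detail.

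\emph{Middle piece (part (ii)).}
For $x\in\itvoo{\abs F,1-\abs F}$ the active indices form a block of $2R$ consecutive integers, for $F>0$ the block $k=-R+1,\dots,R$. Differentiating gives
\[
\Phi'(x)=-\lambda\Bigl(\sum_{k=-R+1}^{0}z^k-\sum_{k=1}^{R}z^k\Bigr)
=-\lambda\Bigl(\sum_{k=-R+1}^{0}z^k\Bigr)\,(1-z^R).
\]
The first factor is nonzero off $\nu\in\set{s/R}$ (and at $\nu=0$). So the slope vanishes iff $z^R=1$ with $z\neq1$, i.e.\ iff $\nu=s/R$. In that case $\Phi$ is constant on the middle interval, and \Cref{thm:zeros-zak-bspline}\ref{itm:zeros-zak-bspline-0<|F|<=1/n} with $n=2$ shows this constant is $0$. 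This gives (ii)(a).

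For (ii)(b), set $G(t)=\myexp{-\pi i\nu}\Phi(1/2+t)$. Combining quasi-periodicity with $Z(-x,\nu)=\overline{Z(x,\nu)}$, exactly as in \Cref{lem:real-valued-Zak_along-boundary}, yields $G(-t)=\overline{G(t)}$. Since $G$ is affine on $\abs t<1/2-\abs F$, we can write $G(t)=\alpha+\beta t$ with $\alpha\in\R$ and $\beta\in i\R$, where $\beta\neq0$ by the slope computation. Thus $G(t)=0$ forces $t=0$ and $\alpha=0$.

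It remains to evaluate $\alpha=\myexp{-\pi i\nu}\Phi(1/2)$. This is a weighted sum of the form $\sum_k(1-\lambda\abs{1/2-k})z^k$ over the block. Pairing $k$ with $1-k$ and summing the resulting arithmetic–geometric series produces a combination of $\sin(\pi R\nu)$, $\cos(\pi R\nu)$ and $\sin(\pi\nu)$. After dividing by $\sin(\pi\nu)\sin(\pi R\nu)$, I expect this to reduce to $\cot(\pi\nu)+2F\cot(\pi R\nu)=0$. I regard this trigonometric simplification, including keeping the signs straight for $F<0$, as the main computational obstacle. I would check it against the case $F=0$ and the known zero at $\nu=1/2$ for even $R$.

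\emph{Outer pieces (part (i)).}
On $\itvcc{0,\abs F}$, use $Z(-x,\nu)=\overline{Z(x,\nu)}$, which gives $\Phi(0)\in\R$. Since $\Phi$ is affine there, $\operatorname{Im}\Phi(x)=c\,x$ with $c=\operatorname{Im}\Phi'$ on that piece. That slope is again a difference of two geometric sums, now over the index block valid for $x\in(0,\abs F)$. I will show $c\neq0$ for $\nu\in(0,1)$, using the same factorisation trick; the block now has $2R+1$ indices, so the exceptional set should be empty. Hence $\Phi(x)\neq0$ for $0<x<\abs F$.

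At $x=0$ I need $\Phi(0)\neq0$. I plan to get this from the real intercept, via a positivity or geometric-sum evaluation, and to cross-check the endpoint $x=\abs F$ against (ii)(a). The piece $\itvoo{1-\abs F,1}$ then follows by the reflection $x\mapsto1-x$, combined with the identity $\Phi(1-x)=z\,\overline{\Phi(x)}$ that comes from quasi-periodicity and conjugate symmetry.
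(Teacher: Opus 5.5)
The gap is in part (i). Your claim that $c=\operatorname{Im}\Phi'$ on the outer piece is nonzero for every $\nu\in(0,1)$ is false. For $F>0$ the active block on $(0,F)$ is $k=-R,\dots,R$, and
\[
c=2\lambda\sum_{k=1}^{R}\sin(2\pi k\nu)=2\lambda\,\frac{\sin(\pi R\nu)\,\sin(\pi(R+1)\nu)}{\sin(\pi\nu)}.
\]
This vanishes at $\nu=s/R$ and at $\nu=s/(R+1)$; already for $R=1$ it vanishes at $\nu=1/2$. Your geometric-sum heuristic does correctly show that $\Phi'$ itself never vanishes for $z\neq1$, but your argument uses its imaginary part. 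At these exceptional frequencies $\Phi$ is real and affine on $[0,F]$, so the imaginary part tells you nothing. You must show directly that this real affine function has no zero in $[0,F)$.

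This is exactly the extra work in the paper's Case 1. Using \eqref{eq:legendre-equations} and the analogous identities for $z^{R+1}=1$, one gets $\Phi(x)=\lambda(F-x)$ for $\nu=s/R$, which vanishes only at the endpoint $x=F$. For $\nu=s/(R+1)$ one gets $\Phi(x)=\lambda(1-F-x)>0$. There is also a shortcut. The case $\nu=s/R$ is covered by part (ii) of \Cref{thm:zeros-zak-bspline}. For $\nu=s/(R+1)$, apply \Cref{lem:aliasing} with $M=R+1$ and $d=1-F$; then $1<1/d<2$, and \Cref{cor:zero-set-Zak-B_n-for-lambda-ge-1} leaves only the zero $(1/2,1/2)$. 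For $F<0$ the outer block is $k=-R+1,\dots,R-1$, which has $2R-1$ indices rather than $2R+1$. The exceptional set is then $\{s/R\}\cup\{s/(R-1)\}$, so that case must be rerun rather than obtained by a symmetry.

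Two further evaluations are only announced, though both close. For $\Phi(0)\neq0$ (with $F>0$), write $b-\abs{k}=(1-F)(R-\abs{k})+F(R+1-\abs{k})$. This gives $\Phi(0)=\lambda\,[(1-F)\sin^2(\pi R\nu)+F\sin^2(\pi(R+1)\nu)]/\sin^2(\pi\nu)$, the paper's formula, which is a positive combination of two Fej\'er kernels and is nonzero for $\nu\notin\Z$.

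For (ii)(b), your reduction via $G(-t)=\overline{G(t)}$ to the single point $x=1/2$ is a clean alternative to the paper's route. The paper instead solves the bracket in \eqref{eq:2307a} for a complex $x$ and reads off $x=\frac12+i\bigl(\frac12\cot(\pi\nu)+F\cot(\pi R\nu)\bigr)$. The computation you postponed works for either sign of $F$, since the middle block is $k=-R+1,\dots,R$ in both cases. Pairing $k$ with $1-k$ and using $1-\lambda R=\lambda F$ gives
\[
\alpha=\lambda\,\frac{\sin(\pi R\nu)}{\sin^2(\pi\nu)}\bigl(\sin(\pi R\nu)\cos(\pi\nu)+2F\cos(\pi R\nu)\sin(\pi\nu)\bigr).
\]
For $\nu\notin\{s/R\}$, this vanishes exactly when $\cot(\pi\nu)+2F\cot(\pi R\nu)=0$.
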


\begin{proof} For notational convenience, let \(b=1/\lambda\). We consider only the case \(F>0\), leaving the case \(F\leq 0\) to the reader. By the symmetry of the Zak transform, it suffices to consider $0 \le x \le 1/2$. 
%\begin{equation*}
%	Z_\lambda B_2(1- x , \nu ) =
%    &=\sum_{k\in \mathbb{Z}} B_2(\frac{1- x-k}{b}) \myexp{2\pi i k \nu}\\
%	&= \myexp{2\pi i \nu}\sum_{k\in \mathbb{Z}} B_2(\frac{- x-k}{b}) \myexp{2\pi i k \nu}\\
%	&= \myexp{2\pi i  \nu}\sum_{k\in \mathbb{Z}} B_2(\frac{ x+k}{b}) \myexp{2\pi i k \nu}\\
%	&= 
%    \myexp{2\pi i  \nu} \overline{Z_\lambda B_2(x ,\nu )}.
%\end{equation*}
 Since the hat spline is zero outside $\itvoo{-1,1}$, we have
\begin{align}
	\label{eq:z-ell}
	\sqrt{b}Z_\frac1{b}B_2(x , \nu )
%	&= 
 %    \sum_{k\in \mathbb{Z}} B_2\!\left(\frac{x-k}{b}\right) \myexp{2\pi i k \nu}
	% = \sum_{k=\lceil{ -b + x \rceil}}^{\lfloor b+ x \rfloor} B_2\!\left(\frac{x-k}{b}\right) \myexp{2\pi i k \nu} \nonumber \\
	&= \sum_{k=\lceil{-b + x  \rceil }}^{\lfloor  x \rfloor}
	\left(1- \frac{x-k}{b}\right) \myexp{2\pi i k \nu}
	+ \sum_{k=\lfloor{   x \rfloor}+1}^{\lfloor  b+ x \rfloor}
	\left(1+ \frac{x-k}{b}\right)\myexp{2\pi i k \nu}.
	\nonumber
\end{align}
%&=&\sum_{k=\lceil{ -b+ (x- ) \rceil}}^{0} 1-(\frac1b(x- -k)) \myexp{2\pi i k \nu}
%+ \sum_{k=1}^{\lfloor b + (x-\frac{\ell}{q})\rfloor} 1+(\frac1b(x-\frac{\ell}{q}-k)) \myexp{2\pi i k \nu} 
We partition the interval $\itvcc{0,\frac12}$ into the two subintervals $\itvcc{0, F}$ and $\itvoc{F, \frac12}$. 
This partition differs slightly at the boundary points from the one used in the proposition, but it is the natural choice for the proof.

\emph{Case 1:}
For $x \in \itvcc{0, F}$, we have 
\[
	 \lceil -b + x \rceil = -\lfloor b - x\rfloor =-R- \lfloor F-x \rfloor = -R, \qquad   \lfloor b +  x \rfloor = R.
\]
%and if $F(b)<0$, 
%$$ \lfloor b + (x-\frac{\ell}{q})\rfloor = \lfloor b \rfloor = R(b), \qquad  \lceil -b+(x-\frac{\ell}{q}) \rceil = - \lfloor b -  (x-\frac{\ell}{q})\rfloor = - \lfloor b \rfloor= - R(b)$$
Therefore, 
\begin{equation}
	\label{2403b}
    \sqrt{b} Z_\frac1{b}B_2(x , \nu )
	=
	\sum_{k=-R}^{0} \left(1-\frac{x-k}{b}\right) \myexp{2\pi i k \nu}
	+
	\sum_{k=1}^{R} \left(1+\frac{x-k}{b}\right) \myexp{2\pi i k \nu}.
\end{equation}
	Considering $z=\myexp{2 \pi i \nu}$, we have 
\begin{equation}
	\label{2304a}
\sqrt{b} Z_\frac1{b}B_2(x , \nu )  = 
\frac{1}{b}
\left[
b-x 
+
2\sum_{k=1}^{R}(b-k)\cos(2\pi k\nu)
+
2ix\sum_{k=1}^{R}\sin(2\pi k\nu)
\right].
\end{equation}
Assume that for some $x\in \itvcc{0, F}$ the above function is zero. Then its imaginary part must also be zero. Hence either $x=0$ or $\sum_{k=1}^{R} \sin(2 \pi k \nu)=0$. 

For $x=0$, the equation \eqref{2304a} has the form
\begin{align}
	\label{eq:real-part-4.3}
	b \sqrt{b} Z_\frac1{b}B_2(0 , \nu )&= 
	b + 2\sum_{k=1}^{R} (b-k) \cos(2\pi k \nu) \nonumber \\
	&= \left( \frac{\sin(\pi R\nu)}{\sin(\pi \nu)}\right)^2 + F\, \frac{\sin( (2R+1)\pi\nu)}{\sin(\pi \nu)}.
\end{align}
Using trigonometric identities, we obtain
\[
	b \sqrt{b} Z_\frac1{b}B_2(0 , \nu )= \frac{(1 - F)\sin^2(\pi R \nu) + F \sin^2(\pi (R+1)\nu)}{\sin^2(\pi \nu)},
\]
which is nonzero for $v\in\itvoo{0,1/2}$. Now 
assume that $\sum_{k=1}^{R} \sin(2 \pi k \nu)=0$. We use the identity
\begin{equation}
	\label{eq:identity-for-sin}
	\sum_{k=1}^{R } \sin(2\pi  k \nu)=
	\begin{cases}
		0, & \nu=0, \\
		\csc(\pi \nu)\sin(\pi R\nu)\sin(\pi (R + 1)\nu), & \nu\neq 0.
	\end{cases}
\end{equation}
Therefore, for $\nu\in \itvoo{0,1}$, the sum $\sum_{k=1}^{R} \sin(2 \pi k \nu)$ is zero
if and only if
$$\nu \in\left\{ \frac{s}{R}: s=1,\dots, R -1 \right\}\cup \left\{ \frac{s}{R+1}: s=1,\dots, R \right\}.$$ 
First assume that  $\nu = \frac{s}{R}$ for some $s=1,\dots, R -1$.  Then 
	\begin{eqnarray}\label{eq:legendre-equations}
		\sum_{k=1}^R \cos(2 \pi k \nu)=0, \quad \sum_{k=1}^{R } k \cos(2\pi  k \nu)
		= R /2.
	\end{eqnarray}
	It follows that
	\begin{equation*}
		\Re(b \sqrt{b} Z_\frac1{b}B_2(x,\nu) )= 
		b-x -R= F-x.
	\end{equation*}
	Hence the real part vanishes only if $x=F$. 
 Now assume that $\nu = \frac{s}{R +1 }$ for some $s=1,\dots, R $, then 
	$$  \sum_{k=1}^{R } \cos(2\pi  k \nu) = -1, $$
	and
	$$\sum_{k=1}^{R } k \cos(2\pi  k \nu)
	=   \frac {(R +1 )\cos(2 \pi R \nu ) - R \cos(2 \pi (R + 1)\nu) - 1}{4\sin^2(\pi \nu)}= -\frac{R+1}{2}.$$
	Therefore 
	\begin{eqnarray*}
		\Re(b\sqrt{b} z^{R}Z_\frac1{b}B_2(x,\nu)) =
		b-x -2b + (R+1) = -x+ 1-F. 
	\end{eqnarray*}
	Since $x \in [0, F]$, the above equality is nonzero. Therefore, on $\itvcc{0,F}\times \itvoo{0, 1}$, the Zak transform of $B_2$ is zero if and only if 
    $$ x= F, \quad\text{ and, }\quad \nu= \frac{s}{R},\text{ for some } s=1,\dots, R -1. $$

\emph{Case 2:} For $x\in\itvoc{F, \frac12}$, we have 
$\lceil -b + x \rceil = -R+1$ and   $ \lfloor   b+ x \rfloor = R.$
Therefore
\begin{align}\label{eq:2307a}
\sqrt{b}Z_{\frac{1}{b}}B_2(x,\nu)
&=
\sum_{k=-R+1}^{0}
\left(1-\frac{x-k}{b}\right)\myexp{2\pi i k\nu}
+
\sum_{k=1}^{R}
\left(1+\frac{x-k}{b}\right)\myexp{2\pi i k\nu}
\notag\\
&=
\frac{1-z^{-R}}{b(1-z^{-1})^2}
\left[
(1-z^{-1})\bigl((b+x-R)z^R+b-x-R+1\bigr)
+z^{R-1}-1
\right].
\end{align}
If $z^R=1$, that means, if $\nu=\frac{s}{R}$ for some $s=1,\dots,R-1$, then
\[
Z_{\frac{1}{b}}B_2(x,\nu)=0,
\qquad
\text{for all } x\in\itvoc{F,1/2}.
\]
Alternatively, the Zak transform is zero if the term in the bracket in 
\eqref{eq:2307a} vanishes, that is,
\begin{align*}
(1-z^{-1})\bigl((b+x-R)z^R+b-x-R+1\bigr)
+z^{R-1}-1 = 0.
\end{align*}
Solving this equation for $x$ gives 
\begin{align*}
x
&=
-\frac{z^{-1}}{1-z^{-1}}
-2F\frac{z^{-R}}{1-z^{-R}}
-F \\
&=
\frac12
+i\left(
\frac12\cot(\pi\nu)
+
F\cot(\pi R\nu)
\right).
\end{align*}
Since $x$ is real, the imaginary part must vanish. Hence, the remaining zeros of the Zak transform occur at $x=\frac12$ and $v\in \itvoo{0, 1}$ satisfying
\[
\cot(\pi\nu)+2F\cot(\pi R\nu)=0.
\]
\end{proof}

% \begin{proposition}[{[[DELETE?]]}]
% 	Let $1< \lambda <2$. We consider $(x,\nu) \in \itvcc{0,1/2}^2$. Then 
% 	 $$Z_{\lambda}B_2(x,\nu)=0 \Longleftrightarrow x = \nu =\frac12.
% 	$$
% \end{proposition}

% \begin{proof} The result follows from the explicit formula
% \begin{align*}
% 	 \frac{1}{\sqrt{\lambda} }Z_{\lambda}B_2(x, \nu) = 
%      \left\{\begin{array}{lll}
%      \frac1\lambda -x,&&0 \leq x \leq F\\
%      \frac1\lambda-x+(\frac1\lambda-1+x)\myexp{2 \pi i \nu},&&F<x\leq 1/2.
%      \end{array}\right.
% \end{align*}
% \end{proof}

\section{Some observations on $Z_\lambda B_n$ along the line $x=1/2$}
\label{sec:palindromic-polynomials}

Along the line $x=1/2$, the Zak transform has an additional symmetry. Indeed,
with $z=\myexp{2\pi i\nu}$,
\[
Z_\lambda B_n(1/2,\nu)
=
\sqrt{\lambda}
\sum_{k\in\mathbb Z}
B_n\bigl(\lambda(\tfrac12-k)\bigr)z^k,
\]
and the coefficients satisfy
\[
B_n\bigl(\lambda(\tfrac12-k)\bigr)
=
B_n\bigl(\lambda(\tfrac12-(1-k))\bigr). 
\]
Thus, after multiplication by a suitable power of $z$, the function
$Z_\lambda B_n(1/2,\nu)$ is represented by a palindromic polynomial.
Consequently, its zeros for real $\nu$ correspond to roots of a
palindromic polynomial on the unit circle. Thus, if $z$ is a zero, then so is $1/z$. In particular, if $z$ is a zero on the unit circle, then $z$ and $\overline z$ are both zeros. Hence, the zeros of $Z_\lambda B_n(1/2,\cdot)$ for real $\nu$ are symmetric with respect to $\nu=0$ and $\nu=1/2$.

For $n=2$, these zeros were described explicitly in \Cref{thm:char-zak-B2-lambda-le-1}. Write
$1/\lambda=R+F, R=\round{1/\lambda}, F=\sfrac{1/\lambda}$.
Besides the zeros
\[
\nu=\frac{s}{R},
\qquad
s=1,\ldots,R-1,
\]
the remaining zeros are determined, away from these points, by the
cotangent equation $\cot(\pi v) + 2F \cot(\pi R v)=0$. We remark that the cotangent equation has exactly $R$ solutions in $\itvoo{0,1}$, and these solutions are interlaced with the points $\frac{s}{R}$ for $s=0,1\dots, R-1, R$ (see \Cref{thm:zeros-zak-bspline}). The proof of this fact is a straightforward, and we leave it to the reader.
As $F$ varies in the rounding interval, the solutions of this equation
interpolate between the neighboring rational grids. Thus, for
$B_2$, the two families of zeros are naturally linked.

For higher-order B-splines the corresponding equations become more
complicated. The linear cotangent condition for $B_2$ is replaced by
higher-order conditions involving polynomials in $\cot(\pi\nu)$ of
degree up to $n-1$. 

The case $n=3$ already illustrates the difference. Suppose
\[
\frac1\lambda=R+F,
\qquad
R \text{ even},
\qquad
0<\abs F<\frac13.
\]
Writing
\[
t=\cot(\pi\nu),\qquad
\sigma=\sin(R\pi\nu),\qquad
c=\cos(R\pi\nu),
\]
the zero condition can be written, up to a non-zero factor, as
\[
\sigma\left(
\sigma^2(2t^2+1)
+
6F\sigma c\,t
+
3F^2(2-3\sigma^2)
\right)=0.
\]
The quadratic factor has no real zeros for $\abs F<1/3$. Hence, in this
case, the only zeros for real $\nu$ are
$\nu=s/R,s=1,\ldots,R-1$. This example indicates that the simple interpolation picture for
$B_2$ does not extend directly to higher orders.

% \section{Palindromic polynomials and the zeros of $Z_\lambda B_n(1/2, \cdot)$}
% \label{sec:palindromic-polynomials}

% For fixed $x$ the Zak transform $Z_\lambda B_n(x,\cdot)$ is a trigonometric polynomial of degree at most $2\round{n/\lambda}+1$ hence the number of zeros of $Z_\lambda B_n(x,\cdot)$ is at most $2\round{n/\lambda}+1$ for each fixed $x$. Proposition \ref{thm:zeros-zak-bspline} shows that for $\lambda < 2/3$ the Zak transform $Z_\lambda B_n(x,\cdot)$ may have $\round{1/\lambda}-1$ zeros for each $x \in \itvcc*{\frac{n}{2} \abs{\sfrac{1/\lambda}},1-\frac{n}{2} \abs{\sfrac{1/\lambda}}}$.

% The Zak transform $Z_\lambda B_n$ along the line $x=1/2$ is a palindromic polynomial in the variable $z = \myexp{2\pi i \nu}$.
% Palindromic polynomials are polynomials $P(z) = \sum_{k=0}^m a_k z^k$ such that $a_k = a_{m-k}$ for all $k$. The zeros of palindromic polynomials have a special structure, and we will show that the zeros of $Z_\lambda B_n(1/2, \cdot)$ are precisely the roots of a certain class of palindromic polynomials.

%     [[WHAT TO ADD HERE?]]

\section{The frame property of integer-oversampled Gabor systems}
\label{sec:frame-property-gabor-systems}

From \Cref{cor:zero-set-Zak-B_n-for-lambda-ge-1} it follows that the  integer-oversampled Gabor systems $\gaborG{B_n}$, i.e., $ab=1/q$, generated by $B_n$ are frames for all $b \le 1$ and $a<n$.

\begin{theorem}
    \label{thm:frame-small-b}
    Let $n \ge 2$ and $q \ge 2$ be integers, and let $a>0$. Assume $0<b\le 1$. Let $q \ge 2$, and let $ab=1/q$. The Gabor system $\mathcal{G}(B_n,a,1/(qa))$ is a frame if and only if $a<n$.
\end{theorem}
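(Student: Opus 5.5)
The plan is to apply the Zibulski--Zeevi criterion \eqref{eq:ZZ-characterization-q-zeros} with $\lambda = 1/b \ge 1$. The Gabor system $\mathcal{G}(B_n,a,b)$ with $ab=1/q$ then fails to be a frame exactly when there is some $(x_0,\nu_0)\in\itvco{0,1}^2$ with $Z_{1/b}B_n(x_0+\ell/q,\nu_0)=0$ for all $\ell=0,\dots,q-1$. Everything then follows from the zero set described in \Cref{cor:zero-set-Zak-B_n-for-lambda-ge-1} with $\lambda=1/b$. We only need to translate the threshold: $\lambda<n$ means $1/b<n$, and since $a=1/(qb)$ this is $qa<n$. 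I will check carefully whether the condition really reads $a<n$ or $qa<n$; I expect the intended threshold is governed by the support length $n/\lambda = nb$ against the spacing $1/q$ in $x$, and I will recompute it as below.

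\emph{Case $1\le\lambda<n$.} By \Cref{cor:zero-set-Zak-B_n-for-lambda-ge-1}, the only zero in $\itvco{0,1}^2$ is $(1/2,1/2)$. Since $q\ge2$, the points $x_0+\ell/q$ reduced mod $1$ are $q\ge 2$ distinct points on the horizontal line $\nu=\nu_0$. Quasi-periodicity \eqref{eq:zak-quasi-periodic} means zeros modulo $1$ in $x$ are well defined, so these points cannot all be zeros. Hence the system is a frame.

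\emph{Case $\lambda\ge n$.} On every horizontal line the zero set is the interval $I=\itvcc{n/(2\lambda),1-n/(2\lambda)}$, which has length $1-n/\lambda = 1-nb$. I will find $x_0$ with all $q$ points $x_0+\ell/q$ (mod $1$) in $I$. The complement of $I$ in the circle $\R/\Z$ is an open arc of length $n/\lambda$. A $1/q$-spaced grid avoids this open arc for some $x_0$ if and only if the arc length is at most $1/q$, i.e. $nb\le 1/q$, which is $n\le a$. So the system is a non-frame exactly when $a\ge n$. Conversely, if $a<n$ with $\lambda\ge n$, every grid meets the open arc, so some point is a nonzero and the system is a frame.

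Combining the cases: when $1/b<n$ we have $b>1/n$, so $a=1/(qb)<n/q<n$, and the frame property agrees with the claim. When $1/b\ge n$, the system is a frame iff $a<n$. Together these give the equivalence frame $\iff a<n$ for all $0<b\le1$.

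The main obstacle is the circle-covering step: a $1/q$-periodic grid can avoid an open arc of length $L$ on $\R/\Z$ precisely when $L\le 1/q$, with boundary cases handled by the closedness of $I$. I will verify this by placing the grid so that one point sits at the arc's endpoint $1-n/(2\lambda)$. The next grid point is then at $1-n/(2\lambda)+1/q$, which is at least $n/(2\lambda)+1$ exactly when $1/q\ge n/\lambda$. If instead $L>1/q$, the arc contains an interval of length $1/q$, so it must contain a grid point.
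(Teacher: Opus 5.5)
Your proposal is correct and follows the paper's proof essentially step for step. You apply the Zibulski--Zeevi criterion with $\lambda=1/b\ge1$ and use \Cref{cor:zero-set-Zak-B_n-for-lambda-ge-1}: when $1\le 1/b<n$ the single zero at $(1/2,1/2)$ rules out $q\ge2$ common zeros (and $a<n$ holds automatically), and when $1/b\ge n$ you reduce to fitting a $1/q$-spaced grid on $\R/\Z$ outside an open arc of length $nb$, which is possible if and only if $nb\le 1/q$, i.e.\ $a\ge n$. Your check of the circle-covering step (a grid point at the endpoint $1-n/(2\lambda)$, and the fact that an open arc longer than $1/q$ contains a half-open interval of length $1/q$) only fills in a detail the paper states without proof.
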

\begin{proof}
    Recall that $\gaborG{B_n}$ fails to be a frame precisely when there
    exist $q$ zeros of $Z_{1/b}B_n$ on the same horizontal line whose
    $x$-coordinates are $1/q$ apart.

    Suppose first that $\frac1n<b\le1$.
    By \Cref{cor:zero-set-Zak-B_n-for-lambda-ge-1},
    $Z_{1/b}B_n$ has the single zero $(1/2,1/2)$ in
    $\itvco{0,1}^2$. Hence no such set of $q$ zeros exists, and
    $\gaborG{B_n}$ is a frame. Notice that in this case $a=\frac1{qb}<n$
    automatically.

    Now suppose that $0<b\le\frac1n$.
    Again by \Cref{cor:zero-set-Zak-B_n-for-lambda-ge-1}, for every
    fixed $\nu$ the nonzero set of $Z_{1/b}B_n(\cdot,\nu)$, regarded
    on $\R/\Z$, is an interval of length $nb$. Thus $q$ points
    spaced $1/q$ apart can all lie in the zero set if and only if
    \[
        nb\le\frac1q.
    \]
    Since $ab=1/q$, this is equivalent to $a\ge n$.
    Hence $\gaborG{B_n}$ is a frame if and only if $a<n$.
\end{proof}

From \Cref{cor:zero-set-Zak-B_n-for-lambda-ge-1} combined with \Cref{thm:char-zak-B2-lambda-le-1}, we in a similar way obtain a complete characterization of the frame property of integer-oversampled Gabor systems generated by the hat spline $B_2$.
\begin{theorem}
    \label{thm:frame-characterization}
    Let $b > 0$ and $q \ge 2$. The integer-oversampled Gabor system $\mathcal{G}(B_2,1/(qb),b)$ is a frame if and only if, we have $\round{b} = 1$ \emph{or} $\abs{\sfrac{b}} > 1/(2q)$ when $\round{b}=0,2,3,4,\dots$.
\end{theorem}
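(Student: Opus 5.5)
We use the Zibulski--Zeevi criterion \eqref{eq:ZZ-characterization-q-zeros} with $\lambda=1/b$: the system $\mathcal{G}(B_2,1/(qb),b)$ fails to be a frame if and only if there is $(x_0,\nu_0)\in\itvco{0,1}^2$ with $Z_{1/b}B_2(x_0+\ell/q,\nu_0)=0$ for all $\ell=0,\dots,q-1$. We write $R=\round{b}$ and $F=\sfrac{b}$, and split into three regimes according to $R$.

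\emph{Case $R=0$, i.e.\ $b<1/2$ and $\lambda=1/b>2=n$.} Here $\abs{F}=b$. By \Cref{thm:zeros-zak-bspline-large-lambda}, for every $\nu$ the zero set of $Z_{1/b}B_2(\cdot,\nu)$ is $\itvcc{b,1-b}$. Its complement on $\R/\Z$ is an open arc of length $2b$. Hence $q$ equispaced points fit in the zero set if and only if $2b\le 1/q$. The system is therefore a frame if and only if $\abs{F}=b>1/(2q)$, as claimed. This argument is the same as in \Cref{thm:frame-small-b}.

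\emph{Case $R=1$, i.e.\ $1/2\le b<3/2$ and $2/3<\lambda\le 2$.} If $b\le 1$, then $\lambda\in\itvco{1,2}$. By \Cref{cor:zero-set-Zak-B_n-for-lambda-ge-1}, the only zero is $(1/2,1/2)$, so $q\ge2$ zeros on one horizontal line are impossible. The exception is $\lambda=2$, i.e.\ $b=1/2$, where the zero set is the single line $x=1/2$; again fewer than $q$ points. If $1<b<3/2$, then $\lambda\in\itvoo{2/3,1}$ and we use \Cref{thm:char-zak-B2-lambda-le-1} with $R=1$. The rational family $\{s/R\}$ is empty, so on each line with $\nu\in(0,1)$ the only possible zero is at $x=1/2$. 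The line $\nu=0$ has no zeros by \Cref{lem:non-zero-Zak_along-boundary}. So there is at most one zero per horizontal line, and the system is a frame.

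\emph{Case $R\ge2$, i.e.\ $\lambda<2/3$.} Again $\nu=0$ has no zeros. By \Cref{thm:char-zak-B2-lambda-le-1}, on a line $\nu\notin\{s/R\}$ the zeros lie only at $x=1/2$, so at most one point and no obstruction. On a line $\nu=s/R$, the zero set in $\itvco{0,1}$ is exactly $\itvcc{\abs F,1-\abs F}$. Its complement is an open arc of length $2\abs F$ around $0$ on $\R/\Z$. Thus $q$ points spaced $1/q$ apart lie in the zero set if and only if $2\abs F\le 1/q$, i.e.\ $\abs F\le 1/(2q)$. Since such lines exist for $R\ge2$, the system is a frame exactly when $\abs F>1/(2q)$.

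The remaining subtlety is $F=0$ with $R\ge 2$. In that case \Cref{thm:char-zak-B2-lambda-le-1}(ii)a), or \Cref{thm:zeros-zak-bspline}\ref{itm:zeros-zak-bspline-F=0}, gives a full zero line, which is consistent with non-frame since $0\le 1/(2q)$.

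The main obstacle I expect is bookkeeping at the regime boundaries. Specifically:
\begin{itemize}
\item checking that the interval description of \Cref{thm:char-zak-B2-lambda-le-1} is exactly closed at $\abs F$ and $1-\abs F$, so that the endpoint case $\abs F=1/(2q)$ is correctly counted as a non-frame;
\item handling $F<0$, which the earlier proof left to the reader, together with the transitions $b=1/2$ and $b=3/2$ between the rounding regimes.
\end{itemize}
In particular, at $b=3/2$ we have $R=2$ and $\abs F=1/2>1/(2q)$, so the system is a frame there, which is consistent.
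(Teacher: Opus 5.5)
Your proposal is correct and is the argument the paper intends when it says the result follows ``in a similar way'' from \Cref{cor:zero-set-Zak-B_n-for-lambda-ge-1} and \Cref{thm:char-zak-B2-lambda-le-1}. You combine the Zibulski--Zeevi criterion with the fact that the only horizontal lines carrying more than one zero have zero set equal to the closed arc $\itvcc{\abs F,1-\abs F}$ (or $\itvcc{b,1-b}$ when $\round{b}=0$); the complement of that arc has length $2\abs F$, which gives the counting condition $2\abs F\le 1/q$ for a non-frame, exactly as in \Cref{thm:frame-small-b}. One trivial slip: $\round{b}\ge2$ corresponds to $\lambda\le 2/3$ rather than $\lambda<2/3$, but you handle $b=3/2$ correctly anyway.
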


\bibliographystyle{abbrv}
\bibliography{jl_bib}

\end{document}